\newtheorem{theorem}{Theorem}[section]
\newtheorem{corollary}[theorem]{Corollary}
\newtheorem{proposition}[theorem]{Proposition}
\newtheorem{lemma}[theorem]{Lemma}
\newtheorem{example}[theorem]{Example}
\newtheorem{notation}[theorem]{Notation}
\newtheorem{definition}[theorem]{Definition}
\newtheorem{algorithm}[theorem]{Algorithm}
\newtheorem{remark}[theorem]{Remark}
\newcommand{\be}{\begin{equation}}
\newcommand{\ee}{\end{equation}}
\newcommand{\bd}{\begin{displaymath}}
\newcommand{\ed}{\end{displaymath}}
\newcommand{\beal}{\begin{align}}
\newcommand{\enal}{\end{align}}
\newcommand{\been}{\begin{enumerate}}
\newcommand{\enen}{\end{enumerate}}
\newcommand{\beit}{\begin{itemize}}
\newcommand{\enit}{\end{itemize}}
\newcommand{\CRN}{chemical reaction network }
\newcommand{\CRNNoSpace}{chemical reaction network}
\newcommand{\CRNs}{chemical reaction networks }
\newcommand{\op}{\operatorname}
\newcommand{\sen}{square embedded network }
\newcommand{\sens}{square embedded networks }
\newcommand{\Nred}{N^{\rm red} }
\newcommand{\Net}{\mathfrak{G}}
\def\SS{\mathcal S}
\def\CC{\mathcal C}
\def\RR{\mathcal R}
\newcommand{\ra}{\rightarrow}
\newcommand{\lra}{\leftrightarrows}
\newcommand{\la}{\leftarrow}
\newcommand{\R}{\mathbb{R}}
\newcommand{\Z}{\mathbb{Z}}
\newcommand{\sign}{\operatorname{sign}}
\newcommand{\Jac}{\operatorname{Jac}}
\newcommand{\Or}{\operatorname{Or}}
\newcommand{\Gred}{G^{\rm red}}
\begin{document}
\title{Simplifying the Jacobian Criterion for precluding multistationarity in chemical reaction networks}
\author{Badal Joshi and Anne Shiu}
\date{February 14, 2012}

\maketitle

\begin{abstract}
Chemical reaction networks taken with mass-action kinetics are dynamical systems that arise in chemical engineering and systems biology.  In general, determining whether a chemical reaction network admits multiple steady states is difficult, as this requires determining existence of multiple positive solutions to a large system of polynomials with unknown coefficients. However, in certain cases, various easy criteria can be applied. One such test is the Jacobian Criterion, due to Craciun and Feinberg, which gives sufficient conditions for ruling out the possibility of multiple steady states.  A chemical reaction network is said to pass the Jacobian Criterion if all terms in the determinant expansion of its parametrized Jacobian matrix have the same sign.  In this article, we present a procedure which simplifies the application of the Jacobian Criterion, and as a result, we identify a new class of networks for which multiple steady states is precluded: those in which all chemical species have total molecularity of at most two.  The total molecularity of a species refers to the sum of all of its stoichiometric coefficients in the network. We illustrate our results by examining enzyme catalysis networks.
\\ \vskip 0.02in
{\bf Keywords:} chemical reaction networks, mass-action kinetics, multiple steady states, Jacobian Criterion, injectivity, species-reaction graph
\end{abstract}

\section{Introduction}
This article concerns an important class of dynamical systems arising in chemical engineering and systems biology, namely, chemical reaction networks taken with mass-action kinetics.  As bistable chemical systems are thought to be the underpinnings of biochemical switches, a key question is to determine which systems admit multiple steady states.  However, the systems of interest are typically high-dimensional, nonlinear, and parametrized by many unknown reaction rate constants.  Therefore, determining whether a chemical reaction network admits multiple steady states is difficult: it requires determining existence of multiple positive solutions to a large system of polynomials with unknown coefficients. However, various criteria have been developed that often can answer this question more easily.  For example, the Deficiency and Advanced Deficiency Theories developed by Ellison, Feinberg, Horn, and Jackson in many cases can affirm that a network admits multiple steady states or can rule out the possibility \cite{EllisonThesis, FeinLectures, FeinDefZeroOne, HornJackson72}.  These results have been implemented in the CRNT Toolbox, computer software developed by Feinberg and improved by Ellison and Ji, which is freely available \cite{Toolbox}.  Related software programs include BioNetX \cite{BioNetX,Pantea_comput} and Chemical Reaction Network Software for Mathematica \cite{CRNsoftware}.  For systems for which the software approaches are inconclusive, Conradi~{\em et~al.}\ advocate a two-pronged approach: they first determine whether certain subnetworks admit multiple steady states, and, if so, test whether these instances can be lifted to the original network \cite{Conradi_subnetwork}.  Another criterion for lifting multistationarity from subnetworks appears in \cite{JS2}.  For systems whose steady state locus is defined by binomials, a criterion for multistationarity is given by P\'erez Mill\'an~{\em et~al.}\ in~\cite{TSS}.  Finally, a degree theory method which can establish multiple steady states can be found in work of Craciun, Helton, and Williams \cite{CHW08}.

A test for answering the question of multistationarity, which is implemented in some of the above-mentioned software programs, is the Jacobian Criterion, which is the focus of this article.  The criterion, which is due to Craciun and Feinberg, gives sufficient conditions for precluding multiple steady states \cite{ME1,ME_entrapped,ME2,ME3}. This work has been extended by Banaji and Craciun \cite{BanajiCraciun2009, BanajiCraciun2010} and by Feliu and Wiuf \cite{FW_prec}. 
A chemical reaction network is said to pass the Jacobian Criterion (or to be `injective') if all terms in the determinant expansion of its parametrized Jacobian matrix have the same sign (see Lemma~\ref{lem:CracFein_subnetwork}).  
We prove that for a given network the Jacobian Criterion can be checked by examining certain subnetworks (Theorem~\ref{thm:proper_subnetworks}).

Our main result (Algorithm~\ref{algor:JC}) is a procedure which simplifies the application of the criterion in two steps: first, the network is reduced to a simpler network, and then the number of 
subnetworks that must be examined is decreased.  As an important consequence, {\em we identify a new class of networks for which multiple steady states is precluded: those networks in which each chemical species has total molecularity of less than or equal to two} (Theorem~\ref{thm:total_mol_2}).  The total molecularity of a species refers to how many non-flow reactions it appears in, where reversible reactions are counted only once and each occurrence of the species is counted with its stoichiometric coefficient; see Definition~\ref{def:reversible_and_TM}.  More generally, Algorithm~\ref{algor:JC} allows us in certain cases to rule out the possibility of multiple steady states simply by inspecting the network.  In addition, the reduction in the number of subnetworks that must be examined lends support to the observation that typically very few terms (if any) in the Jacobian determinant have anomalous sign~\cite{ME1}; this topic is explored by Helton, Klep, and Gomez~\cite{HeltonDeterminant} and Craciun, Helton, and Williams~\cite{CHW08}.  A proposed application of our work is to the problem of enumerating networks that admit multiple steady states: our algorithm quickly removes from consideration many small networks that are computationally expensive to enumerate (see Remark~\ref{rem:enum}).  
Finally, we provide examples which demonstrate that our approach is complementary to that of the Species-Reaction Graph Theorem of Craciun and Feinberg \cite{ME2,CTF06}.


We note that our results are stated for {\em continuous-flow stirred-tank reactors (CFSTRs)}, chemical reaction networks in which all chemical species are removed at rates proportional to their concentrations (see Definition~\ref{def:reversible_and_TM}).  
However, even for more general networks in which not all species take part in 
the outflows, our results often nevertheless apply in one of the following two ways.  First, one can augment a network by any missing 
outflows so that it becomes a CFSTR, and then apply our results to determine whether it passes the Jacobian Criterion.  Then, under mild assumptions on the original network (such as weak-reversibility; see Definition~\ref{def:reversible_and_TM}), the recent work of Craciun and Feinberg allows us to conclude that the original network also precludes multiple steady states \cite[Corollary~8.3]{ME3}.  A second approach is due to work of Feliu and Wiuf.  Their extended definition of the Jacobian Criterion applies to all networks \cite{FW_prec}, thereby bypassing the need to augment a network by missing outflows.  Finally, we emphasize that the Jacobian Criterion applies for more general dynamics than mass-action kinetics \cite{BanajiCraciun2009,CHW08}; our results also may be used in those settings.

This article is organized as follows.  
Section~\ref{sec:intro_CRN} provides an introduction to chemical reaction systems.  
Section~\ref{sec:JC} describes the Jacobian Criterion, and Section~\ref{sec:analysis} lays the groundwork for simplifying its implementation.  
The main result of Section~\ref{sec:TM2} (Theorem~\ref{thm:total_mol_2}) precludes multistationarity from any network in which each chemical species has total molecularity at most two.
Section~\ref{sec:algor} presents Algorithm~\ref{algor:JC}, our procedure which simplifies the application of the Jacobian Criterion.
Finally, Section~\ref{sec:examples} contains examples such as enzyme catalysis networks which illustrate our main results.

\section{Chemical reaction network theory} \label{sec:intro_CRN}
In this section we review the standard notation and define `total molecularity.'    
An example of a {\em chemical reaction} is denoted by the following:
\begin{align*}
  2X_{1}+X_{2} ~\rightarrow~ X_{3}~.
\end{align*}
The $X_{i}$ are called chemical {\em species}, and $2X_{1}+X_{2}$ and
$X_{3}$ are called chemical {\em complexes.}  Assigning the {\em
  reactant} complex $2X_{1}+X_{2}$ to the vector $y =
(2,1,0)$ and the {\em product} complex $X_{3}$ to the vector
$y'=(0,0,1)$, we rewrite the reaction as $ y \rightarrow y'$. In
general we let $s$ denote the total number of species $X_{i}$, and we
consider a set of $m$ reactions, each denoted by
\begin{align*}
  y_{k} \rightarrow y_{k}'~, 
\end{align*} 
for $k \in \{1,2,\dots,m\}$, and $y_k, y_k' \in \Z^s_{\ge 0}$, with $y_k \ne
y_k'$.  
We write each such vector $y_k$  as $y_k = \left( y_{k1}, y_{k2}, \dots, y_{ks}\right) \in \Z^s_{\geq 0}$,
and we call $y_{ki}$ the {\em stoichiometric coefficient} of species $X_i$ in 
complex $y_k$.  
For ease of notation, when there is no need for
enumeration we will drop the subscript $k$ from the notation
for the complexes and reactions.  

A {\em self-catalyzing reaction} will refer to a reaction, such as $A+B \ra 2A+C$, in which there is a species (here, $A$) that is a {\em self-catalyst}: it appears with a higher stoichiometric coefficient in the product complex than in the reactant complex. 
\begin{definition}   \label{def:crn}
  Let $\SS = \{X_i\}$, $\CC = \{y\},$ and $\RR = \{y \to y'\}$ denote finite 
  sets of species, complexes, and reactions, respectively.  The triple
  $\{\SS, \CC, \RR \}$ is called a {\em chemical reaction network} if it satisfies the following:
  \been
  	\item for each complex $y \in \CC$, there exists a reaction in $\RR$ for which $y$ is the reactant complex or $y$ is the product complex, and
	\item for each species $X_i \in \SS$, there exists a complex $y \in \CC$ that contains $X_i$.
  \enen
\end{definition}

We say that a species $X_i$ is a {\em reactant species} (respectively, {\em product species}) if it appears in a reactant (respectively, product) complex of some reaction. A subset of the reactions $\RR' \subset \RR$ defines the {\em subnetwork} $\{\SS|_{\CC|_{\RR'}},\CC|_{\RR'},\RR' \}$, where $\CC|_{\RR'}$ denotes the set of complexes that appear in the reactions $\RR'$, and $\SS|_{\CC_{\RR'}}$ denotes the set of species that appear in those complexes. A network {\em decouples} if there exist nonempty sets of reactions $\RR'$ and $\RR''$ that contain disjoint sets of species and such that $\RR = \RR' ~ \dot \cup ~ \RR''$. Otherwise, we say that the network is {\em coupled}. 
\begin{definition} \label{def:reversible_and_TM}
\begin{enumerate}
	\item  A \CRN is {\em weakly-reversible} if for each reaction $y \ra y' \in \RR$, there exists a sequence of reactions in $\RR$ from $y'$ to $y$ (that is, reactions $y' \ra y_1  \ra y_2 \ra \cdots \ra y_k \ra y$).
  A reaction $y \to y' \in
  \RR$ is {\em reversible} if its reverse reaction $y' \to y$ is also a reaction in $\RR$. 
	\item 
	A {\em flow reaction} contains only one molecule; such a reaction is either an {\em inflow reaction} $0 \ra X_i$ or an {\em outflow reaction} $X_i \ra 0$.  A {\em non-flow reaction} is any reaction that is not a flow reaction.
	\item   
  A \CRN is a {\em continuous-flow stirred-tank reactor network} {\em (CFSTR network)} if it contains all outflow reactions $X_i \ra 0$ (for all $X_i \in \SS$). 
 	\item 
Consider a \CRN whose set of non-flow reactions is given by:
	\begin{align*}
	y_1 & \ra y_1' \quad & y_2 & \ra y_2' \quad & \ldots & \quad & y_l & \ra y_l'  \\
	y_{l+1} & \lra y_{l+1}' \quad & y_{l+2} & \lra y_{l+2}' \quad & \ldots & \quad & y_{l+k} & \lra y_{l+k}' ~,
	\end{align*}
where none of the first $l$ reactions is reversible.  The {\bf total molecularity} of a species $X_i$ in the network, denoted by $\op{TM}(X_i)$, is the following integer:
	\begin{align*}
	\op{TM}(X_i) \quad = \quad \sum_{j=1}^{l+k} \left( y_{ji} + y_{ji}' \right)~,
	\end{align*}	
	where $y_{ji}$ is the stoichiometric coefficient of species $X_i$ in the complex $y_j$.
  \end{enumerate}
\end{definition}
\noindent
Note that Craciun and Feinberg use the term ``feed reactions'' for inflow reactions and ``true reactions'' for non-flow reactions.
In chemical engineering, a CFSTR refers to a tank in which reactions occur.  An inflow reaction represents the flow of species (at a constant rate) into the tank in which the non-flow reactions take place, and an outflow reaction represent the removal of a species (at rate proportional to its concentration).  

\subsection{Dynamics}
The concentration vector $x(t) = \left(x_1(t), x_2(t), \ldots, x_{|\SS|}(t) \right)$ will track the concentration $x_i(t)$ of the species $X_i$ at time $t$.  
For a \CRN $\left\{ \SS, \CC, \RR=\{y_k \ra y_k' \} \right\}$, {\em mass-action kinetics} defines the following system of ordinary differential equations:
\begin{equation}  \label{eq:main}
  \dot x(t) \quad = \quad \sum_{k=1}^{|\RR | } \kappa_k x(t)^{y_k}(y_k' - y_k) \quad =: \quad f(x(t))~,
\end{equation}
where the last equality is a definition.  Here, we make use of multi-index notation: $x(t)^{y_k}:= x_1(t)^{y_{k1}} x_2(t)^{y_{k2}} \cdots x_s(t)^{y_{ks}} $.  (By convention, $x_i^0 := 1$).  
Also, the $\kappa_k$'s 
denote unknown (positive) {\em reaction rate constants}. 
A concentration vector $\overline x \in \R^s_{> 0}$ is a (positive) {\em steady state} of the mass-action system \eqref{eq:main} if $f(\overline x) = 0$.

\begin{remark}
We can scale each concentration $x_i$ so that each outflow reaction (a reaction $y_k \ra y_k'$ with $y_k=X_j$ and $y_k'=0$) has outflow rate equal to one ($\kappa_k = 1$ for such a reaction) \cite[Remark 2.9]{ME1}.  We will make this assumption beginning in the next example.
\end{remark}

In the following example and all others in this article, we will label species by distinct letters such as $A,B,\dots$ rather than $X_1,X_2,\dots$.
\begin{example} \label{ex:diffEq}
Consider the following CFSTR:
	\begin{align*}
	0 \stackrel[k_2=1]{k_1}{\rightleftarrows} A \quad \quad 
	0 \stackrel[k_4=1]{k_3}{\rightleftarrows} B \quad \quad 
	A+B \stackrel{k_{5}}{\rightarrow} 2A
	\end{align*}
The total molecularity of species $A$ and $B$ are three and one, respectively.  
The mass-action differential equations~\eqref{eq:main} for this network are the following:
	\begin{align*}
	\frac{d}{dt} x_A \quad & = \quad k_1 - x_A + k_5 x_A x_B \\ 
	\frac{d}{dt} x_B \quad & = \quad k_3 - x_B - k_5 x_A x_B  ~.
	\end{align*}
\end{example}

\begin{definition}
A CFSTR is said to admit {\em multiple steady states} or is {\em multistationary} if there exist reaction rate constants $\kappa_{i} \in \mathbb{R}_{> 0}$ 
whose resulting system~\eqref{eq:main} admits two or more positive steady states.
\end{definition}

\subsection{Square networks}
Throughout this article, we will be interested in subnetworks in which the number of species and the number of reactions are equal.  We will call such subnetworks `square.'

\begin{definition} \label{def:orientation}
\been
	\item A \CRN is said to be $n$-{\em square} if it contains exactly $n$ species and exactly $n$ directed reactions.  A network is said to be {\em square} if it is $n$-{\em square} for some $n \in \Z_{\geq 0}$.  (A $0$-square network is the empty network.)
	\item For an $n$-square \CRN $N = \{ \SS, \CC, \RR\}$ with reaction set $\RR=\{ y_i \rightarrow y_i' ~|~ i=1,2,\dots, n \}$, the  {\em reactant matrix} and {\em reaction matrix} are the integer $(n \times n)$-matrices
\[ 
M_N ~:=~
	\left[\begin{array}{c}
    	y_1 \\
    	y_2  \\
    	\vdots \\
	y_n
    \end{array}\right]
      \quad \quad {\rm and} \quad \quad
R_N ~:=~
	\left[\begin{array}{c}
    	y_1 - y_1' \\
    	y_2 - y_2' \\
    	\vdots \\
	y_n -y_n'
    \end{array}\right]~, \quad {\rm respectively.}
\]
	\item For a square \CRN $N$, we define its  {\em orientation} $\op{Or}(N) \in \{-1,0,1\}$ as follows: 
	\begin{align*}
	\operatorname{Or}(N) \quad := \quad \sign \left( \det(M_N) \det (R_N) \right)~.
	\end{align*}
\enen
\end{definition}
As defined, our reactant and reaction matrices are the transposes of those of Craciun and Feinberg \cite{ME1}.  However, the orientation of a network is unaffected by transposing either matrix.  
Note that a reaction $y_i \rightarrow y_i'$ is encoded by the vector $y_i - y_i'$ in the reaction matrix $R_N$, which is the negative of its reaction vector $y_i' - y_i$ which appears in the mass-action differential equations~\eqref{eq:main}.

\section{The Jacobian Criterion} \label{sec:JC}
The Jacobian Criterion concerns the determinant expansion of the Jacobian matrix of the differential equations~\eqref{eq:main} of a \CRNNoSpace.
\begin{definition}  Consider a \CRN $\Net$ with $s$ species.  
The {\em Jacobian matrix} associated to $\Net$ is the $s\times s$-matrix whose $(i,j)$-th entry is the partial derivative $\frac{\partial}{\partial x_j}f_i(x)$, where $f_i$ is the $i$-th differential equation in~\eqref{eq:main}.  The {\em Jacobian determinant} of $\Net$, denoted by $\Jac(\Net)$, refers to the determinant of the negative of the Jacobian matrix, which is a polynomial in the variables $x_i$ (for $i=1,2,\dots s$) and $\kappa_i$. 
\end{definition}

Note that Craciun and Feinberg use the notation $\det \left( \frac{\partial p_{\Net}}{\partial x} (x,\kappa)  \right)$ for $\Jac(\Net))$ \cite{ME1}; here $p_{\Net}$ denotes the negative of the mass-action differential equations for the network obtained from $\Net$ by removing all inflow reactions.

\begin{example} \label{ex:Jac}
We now return to the CFSTR examined in Example~\ref{ex:diffEq}, which we denote by $\Net$.  Its Jacobian matrix is:
	\begin{align*}
		\left( 
		\begin{array}{cc}
		-1 + k_5 x_B & k_5 x_A  \\
		-k_5 x_B & -1 - k_5 x_A \\
		\end{array} 
		\right)~. 
	\end{align*}
Therefore, the Jacobian determinant of $\Net$ is the following polynomial:
	\begin{align} \label{eq:ex_Jac}
	\Jac ( \Net ) \quad = \quad  \det 
		\left( 
		\begin{array}{cc}
		1 - k_5 x_B & - k_5 x_A  \\
		k_5 x_B & 1 + k_5 x_A \\
		\end{array} 
		\right) \quad = \quad k_5 x_A - k_5 x_B +1 ~.
	\end{align}
\end{example}

The following result, which is due to Craciun and Feinberg \cite{ME1}, states that the determinant expansion of the Jacobian matrix has at most one term in the case of a square \CRNNoSpace.

\begin{lemma} [Theorem~3.2 of~\cite{ME1}] \label{lem:CracFein_subnetwork0}
Let $N$ be a \CRN that contains $s$ species and $s$ directed reactions. Then the Jacobian determinant of $N$, ${\Jac}(N)$, is either zero or a monomial (in the variables $x_i$ and $\kappa_i$) whose coefficient is given by the following product:
	\begin{align*}
	\det (M_{N}) \cdot \det (R_{N})~,
	\end{align*}
where $M_N$ is the reactant matrix and $R_N$ is the reaction matrix.
\end{lemma}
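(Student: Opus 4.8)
The plan is to compute the negated Jacobian matrix entrywise, recognize it as a product of four matrices whose determinants can be read off directly, and then invoke multiplicativity of the determinant. Since the Jacobian determinant is defined as the determinant of the negative of the Jacobian of $f$, it equals the determinant of the Jacobian of $p := -f$, whose $i$-th component is $p_i(x) = \sum_{k=1}^{s} \kappa_k x^{y_k}(y_{ki} - y_{ki}')$. Differentiating the monomial $x^{y_k}$ with respect to $x_j$ produces a factor $y_{kj}$ and lowers the exponent of $x_j$ by one, so the $(i,j)$ entry of the Jacobian of $p$ is
\[
J_{ij} \;=\; \sum_{k=1}^{s} \kappa_k\, (y_{ki} - y_{ki}')\, y_{kj}\, \frac{x^{y_k}}{x_j}~.
\]

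First I would read off the three structural quantities appearing in this sum. The factor $y_{ki} - y_{ki}'$ is the $(k,i)$ entry of the reaction matrix $R_N$, the factor $y_{kj}$ is the $(k,j)$ entry of the reactant matrix $M_N$, and the remaining data $\kappa_k x^{y_k}$ depends only on the reaction index $k$. Introducing the diagonal matrices $D := \operatorname{diag}(\kappa_1 x^{y_1}, \dots, \kappa_s x^{y_s})$ and $X := \operatorname{diag}(x_1, \dots, x_s)$, the displayed formula is exactly the $(i,j)$ entry of the matrix product
\[
J \;=\; R_N^{\,T}\, D\, M_N\, X^{-1}~,
\]
where $R_N^{\,T}$ contributes the factor $(R_N)_{ki} = (R_N^{\,T})_{ik}$, the diagonal $D$ supplies $\kappa_k x^{y_k}$, the matrix $M_N$ supplies $y_{kj}$, and right multiplication by $X^{-1}$ divides column $j$ by $x_j$. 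It is precisely here that the square hypothesis $|\RR| = |\SS| = s$ is used: only when the number of reactions equals the number of species are all four factors square, so that the determinant of the product is the product of the determinants.

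Taking determinants and using $\det(R_N^{\,T}) = \det(R_N)$ then gives
\[
\Jac(N) \;=\; \det(M_N)\,\det(R_N)\;\Bigl(\prod_{k=1}^{s} \kappa_k\Bigr)\; x^{\,\left(\sum_{k} y_k\right) - \mathbf 1}~,
\]
where $\mathbf 1 = (1,\dots,1)$, since $\det D = \prod_k \kappa_k x^{y_k}$ and $\det(X^{-1}) = \prod_j x_j^{-1}$. If $\det(M_N)\det(R_N) = 0$ this vanishes, as claimed. The only remaining point is to confirm that, when the coefficient is nonzero, the right-hand side is a genuine monomial with nonnegative exponents rather than a Laurent monomial. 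This is the one step requiring a small argument: the exponent of $x_i$ is $\bigl(\sum_k y_{ki}\bigr) - 1$, and I would argue it is nonnegative because $\det(M_N) \neq 0$ forces every column of $M_N$ to be nonzero; as the entries $y_{ki}$ are nonnegative integers, the column sum $\sum_k y_{ki}$ is then at least one. I expect this nonnegativity check to be the only subtle part of the argument, the rest being a direct determinant computation.
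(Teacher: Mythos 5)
Your argument is correct, and every step checks out: the identification of $\partial p_i/\partial x_j$ with $\sum_k (R_N)_{ki}\,\kappa_k x^{y_k} (M_N)_{kj}\, x_j^{-1}$, the factorization $J = R_N^{T} D M_N X^{-1}$, and the use of squareness to split the determinant are all valid (the formal $X^{-1}$ is harmless since the identity holds over the field of rational functions in $x$ and $\kappa$). Note, however, that there is no internal proof to compare against: the paper imports this statement verbatim as Theorem~3.2 of Craciun and Feinberg \cite{ME1} and never proves it. Measured against the original source, your proof is essentially the same mechanism in cleaner packaging: Craciun and Feinberg write the Jacobian as a sum of $s$ rank-one matrices $\kappa_k x^{y_k}(y_k-y_k')\,v_k^{T}$ (with $(v_k)_j = y_{kj}/x_j$) and expand the determinant by multilinearity, observing that with exactly $s$ reactions only one term survives; your observation that this sum of rank-one matrices \emph{is} the product $R_N^{T}\,D\,M_N\,X^{-1}$ of four square matrices replaces that expansion by a single application of multiplicativity of the determinant, and makes transparent exactly where the hypothesis $|\RR|=|\SS|=s$ enters. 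One small simplification is available at the end: the nonnegativity of the exponents need not be argued via column sums of $M_N$, since $\Jac(N)$ is by definition the determinant of a matrix of polynomials and hence a polynomial, so if the resulting Laurent monomial has nonzero coefficient its exponents are automatically nonnegative; that said, your column-sum argument is correct and has the virtue of being self-contained.
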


The following result is due to Craciun and Feinberg.
\begin{lemma} [Theorems~3.1 and~3.3 \cite{ME1}] \label{lem:CracFein_subnetwork}
Let $\Net=\Net^{s,r,r_o,r_i}$ be a \CRN that contains $s$ species, $r$ non-flow reactions, $r_o$ outflow reactions, and $r_i$ inflow reactions.  
 Let $\Jac(\Net)$ denote the Jacobian determinant associated with $\Net$, which is a polynomial in variables $x_1, x_2, \dots, x_s,$ $\kappa_1, \kappa_2, \dots, \kappa_{r}$. Let $G_1,G_2,\ldots,G_{{r+r_o+r_i} \choose s}$ enumerate the subnetworks of $\Net$ that contain exactly $s$ reactions, such that the first ${r+r_o} \choose {s}$ do not contain inflow reactions.
\been
\item Each term in the expansion of $\Jac(\Net)$ is the Jacobian determinant of a unique subnetwork $G_i$.  Conversely, the Jacobian determinant of each subnetwork $G_i$ is either zero or is a monomial in the Jacobian determinant of $\Net$.  
In other words,
\bd
\Jac(\Net) \quad = \quad \sum_{i=1}^{{r+r_o+r_i} \choose s} \Jac(G_i) 
	\quad = \quad \sum_{i=1}^{{r+r_o} \choose s} \Jac(G_i) ~.\ed
\item Moreover, if $\Net$ is a CFSTR, then the following are equivalent:
\begin{itemize}
	\item The polynomial $\Jac(\Net)$ is positive for all $x \in \R^s_{>0}$ and $\kappa_i>0$.
	\item Each term $\Jac(G_i)$ is either zero or a positive monomial.
	\item Each subnetwork $G_i$ has non-negative orientation: $\Or(G_i) \geq 0$.
\end{itemize}
If these equivalent conditions hold, then $\Net$ does not admit multiple steady states.  
 \enen 
\end{lemma}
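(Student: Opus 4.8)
The plan for part (1) is to expose the Jacobian determinant as a single application of the Cauchy--Binet formula. Differentiating $f_i(x)=\sum_k \kappa_k x^{y_k}(y_{ki}'-y_{ki})$ and negating gives $-\partial f_i/\partial x_j = \sum_k \kappa_k (y_{ki}-y_{ki}')\, y_{kj}\, x^{y_k}/x_j$, which I recognize as the $(i,j)$ entry of the matrix product
\[
 R^{\mathsf T}\,\Lambda\, M\, X^{-1}, \qquad \Lambda := \operatorname{diag}\big(\kappa_k x^{y_k}\big),\quad X^{-1}:=\operatorname{diag}(1/x_j),
\]
where $M$ and $R$ are the $|\RR|\times s$ matrices with rows $y_k$ and $y_k-y_k'$ respectively (one per reaction of $\Net$), and $\Lambda$ is indexed by all reactions. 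Taking determinants, pulling out $\det(X^{-1})=1/(x_1\cdots x_s)$, and applying Cauchy--Binet to the rectangular product $R^{\mathsf T}\Lambda M$ over all $s$-element sets $S$ of reactions yields a sum whose $S$-summand factors as $\det(R_{G_S})\det(M_{G_S})$ times the positive monomial $\big(\prod_{k\in S}\kappa_k x^{y_k}\big)/(x_1\cdots x_s)$. By Lemma~\ref{lem:CracFein_subnetwork0} this summand is exactly $\Jac(G_S)$, giving $\Jac(\Net)=\sum_{|S|=s}\Jac(G_S)$ and proving the first assertion. The two displayed sums coincide because any $S$ containing an inflow reaction $0\to X_i$ contributes a zero row (namely $y_k=0$) to $M_{G_S}$, so that $\det(M_{G_S})=0$ and $\Jac(G_S)=0$; thus only inflow-free subnetworks survive.

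For part (2), the equivalence of the second and third bullets is read directly off Lemma~\ref{lem:CracFein_subnetwork0}: each $\Jac(G_i)$ is $\det(M_{G_i})\det(R_{G_i})$ times a monomial that is strictly positive on $\R^s_{>0}$ for positive rate constants, so $\Jac(G_i)$ is zero, a positive monomial, or a negative monomial precisely when $\Or(G_i)$ is $0$, $+1$, or $-1$. That the second bullet implies the first uses the CFSTR hypothesis: the subnetwork $G_\ast$ consisting of the $s$ outflow reactions $X_1\to 0,\dots,X_s\to 0$ has $M_{G_\ast}=R_{G_\ast}=I$, so its summand $\Jac(G_\ast)$ is a strictly positive constant; if all other summands are nonnegative then $\Jac(\Net)$ is bounded below by this constant and hence is strictly positive.

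The substance of part (2) is the remaining implication, which I would establish in the contrapositive: if some $\Or(G_{i_0})=-1$, then $\Jac(\Net)$ is negative somewhere. The enabling observation is that no two surviving summands cancel. Indeed, distinct $s$-element sets $S$ yield distinct monomials: if they differ in their non-flow reactions then their square-free products of rate constants $\prod_{k\in S}\kappa_k$ differ, while if they share all non-flow reactions then they differ in their outflow reactions, and since each outflow reaction carries a distinct species $x_i$ their concentration monomials differ. Consequently the exponent vector of the anomalous term is a vertex of the Newton polytope of $\Jac(\Net)$, exposed by a lexicographic weight that first separates the rate-constant coordinates and then, among sets with the same non-flow part, separates the concentration coordinates. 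Scaling $(\kappa,x)$ to infinity along the corresponding ray makes the anomalous monomial dominate all others, so $\Jac(\Net)$ takes the sign of its (negative) coefficient $\det(M_{G_{i_0}})\det(R_{G_{i_0}})$ and is therefore negative for extreme parameter values. This no-cancellation-and-domination step is the main obstacle, since it must control all $\binom{r+r_o}{s}$ summands simultaneously; the two-layer structure of the separating weight (rate constants, then concentrations, the latter available precisely because a CFSTR supplies one single-species outflow per species) is what makes it go through.

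Finally, when the equivalent conditions hold, $\Jac(\Net)$ is everywhere positive on $\R^s_{>0}$, so the negated mass-action vector field has a nonsingular Jacobian of constant sign throughout the positive orthant. For a CFSTR the stoichiometric subspace is all of $\R^s$, so steady states are exactly the positive zeros of $f$; the injectivity theorem of Craciun and Feinberg then guarantees that such an $f$ is injective on $\R^s_{>0}$, whence $f$ has at most one positive zero and multiple steady states are precluded. This last clause I would invoke from their injectivity result rather than reprove, as it lies outside the determinant bookkeeping carried out above.
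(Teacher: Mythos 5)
You cannot be checked against an internal argument here, because the paper offers none: Lemma~\ref{lem:CracFein_subnetwork} is imported verbatim from Theorems~3.1 and~3.3 of Craciun and Feinberg \cite{ME1}, and the authors never reprove it. Judged on its own terms, your reconstruction is sound and in fact recovers the substance of the cited proofs. The factorization of the negated Jacobian as $R^{\mathsf T}\Lambda M X^{-1}$ and the Cauchy--Binet expansion correctly give
$\Jac(\Net)=\sum_{S}\det(R_{G_S})\det(M_{G_S})\bigl(\prod_{k\in S}\kappa_k x^{y_k}\bigr)/(x_1\cdots x_s)$,
which identifies each summand with $\Jac(G_S)$ (interpreting each $G_S$ over the full species set, so a missing species gives a zero column and a zero term) and kills every $S$ containing an inflow via a zero row of $M_{G_S}$; this is part~1. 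Your treatment of the hardest implication of part~2 --- that a single negatively oriented $G_{i_0}$ forces $\Jac(\Net)$ to take negative values --- is also correct, and the extra work is genuinely necessary: a polynomial with a negative coefficient need not be negative anywhere on the positive orthant, so one must rule out cancellation and then force domination. Your two-case distinctness argument (rate-constant monomials separate different non-flow parts; with the outflow rates normalized to $1$ as in the paper, the outflow species separate sets sharing a non-flow part) does exactly this, and the two-stage weight makes the anomalous exponent an exposed vertex of the Newton polytope, so scaling along that ray yields a negative value.

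The one place your proof is not self-contained is the final clause. Positivity of $\Jac(\Net)$ on the positive orthant does not imply injectivity of $f$ by any general principle, and you rightly invoke Craciun and Feinberg's injectivity theorem (their Theorem~3.1) for the step from everywhere-positive Jacobian determinant to ``at most one positive steady state,'' rather than asserting the false general implication. That injectivity argument is the deepest ingredient of \cite{ME1}; since the present paper likewise treats the entire lemma as a citation, your dependency is appropriate, but a fully self-contained proof would have to reproduce that argument as well.
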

One of the main results in the next section is Theorem~\ref{thm:proper_subnetworks}, which will extend the equivalent conditions that appear in  part~2 of Lemma~\ref{lem:CracFein_subnetwork} so that instead of checking whether the subnetworks $G_i$ have non-negative orientation, we need only check the orientations of certain smaller `embedded' networks (Definition \ref{def:embeddednet}).  If any of these conditions hold for a CFSTR $\Net$, for instance if $\Jac(\Net)$ has only positive terms, then we say that $\Net$ {\em passes the Jacobian Criterion} (or $\Net$ is `injective').  

\begin{remark}
 $\Jac(\Net)$ always has at least one positive term when $\Net$ contains all outflow reactions (such as for a CFSTR).  This arises because one of the subnetworks $G_i$ consists of the $s$ outflow reactions.  Clearly, the reactant and reaction matrices of this subnetwork are both the identity matrix, so its orientation is indeed positive. 
\end{remark}
The aim of this article is to simplify the computation that must be performed in determining whether a CFSTR passes the Jacobian Criterion, that is, whether all other terms in the determinant expansion are positive.  In fact, there are typically few negative terms \cite{ME1}; this topic is explored in \cite{CHW08,HeltonDeterminant}. 

\begin{notation}
Although Feliu and Wiuf have extended the definition of the Jacobian Criterion to apply also to non-CFSTR networks \cite{FW_prec}, the main focus of this article is the case when the \CRN $\Net$ is a CFSTR. The subnetwork of $\Net$ containing all the non-flow reactions but none of the flow reactions will be called {\em the non-flow subnetwork of $\Net$} and denoted by $G$.  Conversely, any reaction network $N=\{\SS, \CC, \RR \}$ is contained in a CFSTR obtained by including all outflow reactions:  we denote this CFSTR by $$\tilde N := \{\SS,~\CC \cup \SS \cup \{0\},~\RR \cup \{X_i \ra 0\}_{X_i \in \SS} \}~.$$
\end{notation}
\begin{example} \label{ex:1}
We now return to the CFSTR examined in Examples~\ref{ex:diffEq} and~\ref{ex:Jac}, which has $s=2$ species and $r=1$ non-flow reaction.  Its non-flow subnetwork is:
	\begin{align*}
	G \quad := 
		\quad \{A+B \ra 2A\}~.
	\end{align*}

The ${r+r_o \choose s} = {3 \choose 2} =3$ $2$-square subnetworks of $\tilde G = \Net = \{A \lra 0,~ B \lra 0,~ A+B \ra 2A\}$ that do not contain inflow reactions are the following:
	\begin{align*}
	G_1 \quad & = \quad \{A \rightarrow 0,~ B \rightarrow 0\}~, \\
	G_2 \quad & = \quad \{A \rightarrow 0,~ A+B \rightarrow 2A\}, \quad {\rm and} \\
	G_3 \quad & = \quad \{B \rightarrow 0,~ A+B \rightarrow 2A\}~.
	\end{align*}
As noted earlier, the subnetwork comprised of all outflow reactions, $G_1$, has positive orientation: $\Or(G_1)=1$.  Now we examine the reactant and reaction matrices of the remaining subnetworks $G_2$ and $G_3$:
	\begin{align} \label{matrices_ex}
	M_{G_2} \quad & = \quad 
		\left( 
		\begin{array}{cc}
		1 & 0  \\
		1 & 1  \\
		\end{array} 
		\right) \quad 
	& R_{G_2} \quad &= \quad 
		\left( 
		\begin{array}{cc}
		1 & 0  \\
		-1 & 1  \\
		\end{array} 
		\right) \quad \\  
	M_{G_3} \quad & = \quad 
		\left( 
		\begin{array}{cc}
		0 & 1  \\
		1 & 1  \\
		\end{array} 
		\right) \quad 
	& R_{G_3} \quad &= \quad 
		\left( 
		\begin{array}{cc}
		0 & 1  \\
		-1 & 1  \\
		\end{array} 
		\right)~. \notag
	\end{align}
We see that the orientations of the two networks are $\Or(G_2)=$ $\sign \left( \det(M_{G_2}) \det (R_{G_2}) \right)$ $=1$ and $\Or(G_3)=\sign \left( \det(M_{G_3}) \det (R_{G_3}) \right)=-1$.  Therefore, Lemma~\ref{lem:CracFein_subnetwork} implies that the CFSTR $\Net=\tilde G$ fails the Jacobian Criterion.  This is consistent with the computation of the Jacobian determinant~\eqref{eq:ex_Jac} in Example~\ref{ex:Jac}, which we saw contains one negative term.
\end{example}

\subsection{Reactant and reaction rank}
The Jacobian Criterion requires that each $s$-square subnetwork $G_i$ of $\Net$ has either a singular reactant matrix, a singular reaction matrix, or both matrices have nonzero determinants with the same sign.  We now give a name to these cases:

\begin{definition} A square \CRN $N$ is said to have {\em full reactant-rank} 
if $\det(M_N) \neq 0$, and is said to have {\em full reaction-rank} if $\det(R_N) \neq 0$.  
\end{definition}
If a square network has both full reactant-rank and full reaction-rank, then it clearly has {\em nonzero orientation}.  

\begin{definition}
In a square network $N$, we say that species $X_i$ is {\em changed} in reaction $j$ if $(R_N)_{ji} \neq 0$. 
\end{definition}

In other words, species $X_i$ is  changed in reaction $j$ if the concentration of species $X_i$ is changed when reaction $j$ takes place.  For example, species $A$ is changed in the reaction $A\rightarrow B+C$ and in the reaction $A+B \rightarrow 2A+B$.  On the other hand, species $B$ and $C$ are  changed only in the first reaction.

\begin{lemma} \label{lem:full_rk}
If $N$ is a square network with full reactant-rank, then:
\been
\item Each species of $N$ is a reactant species.
\item No reaction of $N$ is an inflow reaction or a generalized inflow reaction of the form $0 \rightarrow \sum_i a_i X_i $ (where $a_i \geq 0$).
\item No two reactions of $N$ share the same reactant complex.
\enen
If $N$ is an $n$-square network with full reaction-rank, then:
\been
\item Each species of $N$ is changed in some reaction.
\item $N$ does not contain both a reaction and its reverse.
\enen
\end{lemma}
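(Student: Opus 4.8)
The plan is to prove each of the five claims by contraposition: in each case I assume the stated conclusion fails, and then exhibit a linear dependence among the rows of either $M_N$ or $R_N$, forcing the relevant determinant to vanish. Recall that the rows of $M_N$ are the reactant vectors $y_i$ and the rows of $R_N$ are the difference vectors $y_i - y_i'$; a square matrix has nonzero determinant precisely when its rows are linearly independent, equivalently when it has no zero column and no nontrivial linear dependence among rows.

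For the three reactant-rank claims, I would argue as follows. For claim~1, suppose some species $X_i$ is not a reactant species. Then $y_{ki} = 0$ for every reaction $k$, so the $i$-th \emph{column} of $M_N$ is identically zero, whence $\det(M_N) = 0$. For claim~2, an inflow reaction $0 \ra \sum_i a_i X_i$ (and in particular the generalized form with $a_i \geq 0$) has reactant complex $y = 0$, so the corresponding \emph{row} of $M_N$ is the zero vector, again forcing $\det(M_N)=0$. For claim~3, if two reactions $y_j \ra y_j'$ and $y_k \ra y_k'$ share the same reactant complex $y_j = y_k$, then rows $j$ and $k$ of $M_N$ are identical, so the rows are linearly dependent and $\det(M_N)=0$. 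Each of these is an immediate row/column degeneracy, so the contrapositive gives exactly the three assertions.

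For the two reaction-rank claims, the reasoning is parallel but uses $R_N$. For claim~1, if species $X_i$ is changed in no reaction, then by the definition preceding the lemma $(R_N)_{ki}=0$ for all $k$, so the $i$-th column of $R_N$ is zero and $\det(R_N)=0$. For claim~2, suppose $N$ contains both a reaction $y_j \ra y_j'$ and its reverse $y_j' \ra y_j$, indexed by $j$ and $k$. Then the corresponding rows of $R_N$ are $y_j - y_j'$ and $y_j' - y_j$, which are negatives of one another; these two rows are linearly dependent, so $\det(R_N) = 0$. Taking contrapositives yields both statements.

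I do not anticipate a genuine obstacle here: every claim reduces to spotting a zero column, a zero row, or a pair of equal or opposite rows in an $n \times n$ integer matrix, and each such degeneracy kills the determinant. The only point requiring a little care is purely definitional, namely reading off which entries of $M_N$ and $R_N$ are zero from the hypothesis in each case (for instance, recognizing that ``$X_i$ is not changed in any reaction'' is literally the statement that column $i$ of $R_N$ vanishes, and that ``$X_i$ is not a reactant species'' is the statement that column $i$ of $M_N$ vanishes). Once the bookkeeping is set up correctly, every part follows in a single line.
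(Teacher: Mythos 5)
Your proof is correct and matches the paper's argument exactly: the paper's one-line proof states that ``nonsingular square matrices can not have a zero column, a zero row, two identical rows, or a row and its negative,'' and your five cases are precisely the detailed instantiations of those four degeneracies in $M_N$ and $R_N$. Nothing is missing; you have simply written out the bookkeeping that the paper leaves implicit.
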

\begin{proof} This result follows because nonsingular square matrices can not have a zero column, a zero row, two identical rows, or a row and its negative. \end{proof}

\subsection{Known tests for passing the Jacobian Criterion}
Known sufficient conditions on the reaction network for passing the Jacobian Criterion arise from the following results: the SCL Graph Theorem of Schlosser and Feinberg \cite{SchlosserFeinberg}, 
the Species-Reaction Graph Theorem of Craciun and Feinberg \cite{ME2, ME3} and the related work of Banaji and Craciun \cite{BanajiCraciun2009,BanajiCraciun2010}, and results of Helton, Klep, and Gomez \cite{HeltonDeterminant}.  In this article, we present a complementary test (Algorithm~\ref{algor:JC}).

\section{Analysis of subnetworks} \label{sec:analysis}

Theorems~\ref{thm:proper_subnetworks} and~\ref{thm:reduction} in this subsection form the basis of the procedure (Algorithm~\ref{algor:JC}) that we will introduce to simplify the implementation of the Jacobian Criterion.  The following example motivates our results.

\begin{example} \label{ex:1again}
We return to the network $G$ in Example~\ref{ex:1}.  Note that in both the reactant and product matrices~\eqref{matrices_ex} of the subnetwork $G_2$, the first row is the vector $(1,0)$.  So, the orientation of $G_2$ depends only on the entries in the lower-right of the two matrices.  In other words, by letting $N_2$ denote the 1-square network obtained from $G_2$ by `removing' the outflow reaction $A \ra 0$ and `removing' the species $A$ from the non-flow reaction $A+B \ra 2A$:
	\begin{align*} N_2 \quad = \quad \{ B \ra 0 \}~, \end{align*} 
it follows that $\Or(G_2)=\Or(N_2)$.  Similarly, for the network obtained from $G_3$ by removing $B \ra 0$ and the species $B$ from the non-flow reaction:
	\begin{align*} N_3 \quad = \quad \{ A \ra 2A \}~, \end{align*} 
we have $\Or(G_3)=\Or(N_3)$.  
\end{example}

In this section, we make the simplifications performed in Example~\ref{ex:1again} formal: the first rows in the matrices $M_{G_2}$ and $R_{G_2}$ \eqref{matrices_ex} will be called `pivotal rows' (Definition~\ref{def:pivot}), Theorem~\ref{thm:proper_subnetworks} will permit us to restrict our attention to the square `embedded' networks $N_i$ rather than the larger $s$-square subnetworks $G_i$, and Theorem~\ref{thm:reduction} will allow us to remove the species $B$ from the network.  In general the networks ${N}_i$ will be smaller and fewer in number, 
thereby simplifying the Jacobian Criterion.

\subsection{Analysis of row and column pivots} \label{sec:pivots}\quad
\begin{definition} \label{def:pivot}
\been
	\item For any matrix $A$, if there exists a row $i$ with one positive entry, $A_{ij}>0$ (respectively, $<0$) for some $j$, and all other entries zero, $A_{ik}=0$ for $k \neq j$, we say that $A_{ij}$ is a positive (respectively, negative) {\em row pivot} and the $i$-th row is a {\em pivotal row}. 
	\item For any matrix $A$, if  $A_{ij}>0$ (respectively, $<0$) for some $i$ and $A_{kj}=0$ for all $k \neq i$, we say that $A_{ij}$ is a positive (respectively, negative) {\em  column pivot} and the $j$-th column is a {\em pivotal column}.
\enen 
\end{definition}

\begin{notation}
For an $s  \times s$-matrix $A$ and indices $1 \leq i,j \leq s$, we define the associated matrix $A^{\widehat{ij}}$ which is obtained from $A$ by removing the $i$-th row and the $j$-th column.
\end{notation}

In the following theorem, we translate pivot conditions for reactant and reaction matrices into the language of reaction networks. Note that reactant matrices $M$ are non-negative matrices, so their pivots are necessarily positive pivots.
\begin{proposition} \label{prop:pivot}
Let $N$ be a square \CRN with reactant matrix $M$ 
and reaction matrix $R$.  Then:
\begin{enumerate}
	\item $M_{ij}$ is a row pivot if and only if the $i$-th reactant contains only species $j$ (in other words, the $i$-th reaction is $kX_j \rightarrow \cdot$).
	\item $M_{ij}$ is a column pivot if and only if species $j$ appears in the reactant complex of the $i$-th reaction and in no other reactant complex. 
	\item $R_{ij}$ is a row pivot if and only if only species $j$ is changed in reaction $i$.
	\item $R_{ij}$ is a column pivot if and only if species $j$ is changed only in reaction $i$. 
	\item Both $M_{ij}$ and $R_{ij}$ are row pivots if and only if the $i$-th reaction is of the form 
	\bd
	a_jX_j \rightarrow b_jX_j  ~~~~~~~~~{\text where}~~~ 0 \neq a_j\neq b_j
	\ed
Moreover, $R_{ij}$ is a positive pivot if $a_j > b_j$ and a negative pivot if $a_j < b_j$. If $M_{ij}$ and $R_{ij}$ are positive row pivots then we will 
say that $i$ is a {\em pivotal reaction}. 
	\item  Both $M_{ij}$ and $R_{ij}$ are column pivots if and only if species $X_i$ appears only in reaction $j$.  Moreover, $R_{ij}$ is a negative column pivot if and only if $X_i$ is a self-catalyst in reaction $j$.
	\item Suppose that for some $i,j$, both $M_{ij}$ and $R_{ij}$ are positive pivots. Then 
		\begin{align*}
		\Or(N) \quad = \quad 
			\sign(\det(M)\det(R)) \quad = \quad \sign(\det(M^{\widehat{ij}})\det(R^{\widehat{ij}}))~.
		\end{align*}
		 If $M_{ij}$ is a (positive) pivot and $R_{ij}$ is a negative pivot, then 
\begin{align*}
 \Or(N) \quad = \quad \sign(\det(M)\det(R)) \quad = \quad -\sign(\det(M^{\widehat{ij}})\det(R^{\widehat{ij}}))~.
\end{align*}

\end{enumerate}
\end{proposition}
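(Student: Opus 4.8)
The plan is to dispatch parts~(1)--(4) by directly unpacking the definitions of the matrix entries and of row/column pivots, then obtain parts~(5) and~(6) by intersecting the relevant pairs, and finally prove the determinant identity in part~(7) by a single-term Laplace expansion. Throughout I would keep in hand the two entry formulas $M_{ij} = y_{ij}$, the stoichiometric coefficient of species $X_j$ in the reactant complex of reaction $i$, and $R_{ij} = y_{ij} - y'_{ij}$.

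With these formulas, each of parts~(1)--(4) is an immediate translation. For part~(1), since $M$ is nonnegative, row $i$ being pivotal means $y_i$ has a single nonzero coordinate, at species $j$, i.e.\ the reactant is $kX_j$; for part~(2), column $j$ of $M$ having a single nonzero entry says species $j$ sits in exactly one reactant complex. Parts~(3) and~(4) are the analogous statements for $R$, rephrased through the definition that $X_k$ is changed in reaction $i$ precisely when $R_{ik} \neq 0$. Part~(5) then follows by combining~(1) and~(3): the reactant contains only $X_j$ (so $y_{ik}=0$ for $k\neq j$) and only $X_j$ is changed (so $y_{ik}-y'_{ik}=0$ for $k\neq j$), forcing $y'_{ik}=0$ for $k\neq j$ as well, so the reaction is $a_jX_j \to b_jX_j$ with $a_j = M_{ij}\neq 0$ and $a_j\neq b_j$ since $R_{ij}=a_j-b_j\neq 0$; the pivot sign is that of $a_j-b_j$. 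Part~(6) similarly combines~(2) and~(4), and its self-catalyst clause is just the observation that $R_{ij}=y_{ij}-y'_{ij}<0$ exactly when the product coefficient exceeds the reactant coefficient, which is the definition of a self-catalyst.

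The substance is in part~(7). The key observation is that whenever $A_{ij}$ is a pivot --- whether a row pivot or a column pivot --- the corresponding Laplace expansion of $\det A$ (along row $i$, or along column $j$) collapses to its single surviving term, giving
\[ \det A \ = \ A_{ij}\,(-1)^{i+j}\,\det A^{\widehat{ij}}. \]
Applying this to both $A=M$ and $A=R$ at the common position $(i,j)$ yields $\det M\,\det R = M_{ij}R_{ij}\,(-1)^{2(i+j)}\,\det M^{\widehat{ij}}\,\det R^{\widehat{ij}} = M_{ij}R_{ij}\,\det M^{\widehat{ij}}\,\det R^{\widehat{ij}}$. Since $M_{ij}>0$, the sign of $M_{ij}R_{ij}$ equals the sign of $R_{ij}$, which produces the two stated cases directly.

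I do not anticipate a genuine obstacle here; the only point demanding care is the sign bookkeeping. One must verify that the prefactor $(-1)^{i+j}$ produced by the expansion is identical for $M$ and for $R$, so that the two factors cancel to $(-1)^{2(i+j)}=1$, and that this cancellation holds uniformly regardless of whether each of the two pivots happens to be of row or of column type. Once this uniformity is noted, the reduction of $\Or(N)$ to $\sign(\det M^{\widehat{ij}}\det R^{\widehat{ij}})$ is automatic.
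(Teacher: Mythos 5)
Your proposal is correct and is exactly the argument the paper intends: the paper's own proof is the single line ``this proposition follows easily from definitions and elementary linear algebra,'' and your unpacking of the entry formulas $M_{ij}=y_{ij}$, $R_{ij}=y_{ij}-y'_{ij}$ for parts (1)--(6) together with the single-term Laplace expansion $\det A = A_{ij}(-1)^{i+j}\det A^{\widehat{ij}}$ at the common position $(i,j)$ for part (7) is precisely that elementary argument made explicit. Your care about the sign factor $(-1)^{i+j}$ cancelling between $M$ and $R$ regardless of whether each pivot is of row or column type is the right (and only) delicate point.
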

\begin{proof}
This proposition follows easily from definitions and elementary linear algebra.
\end{proof}

\subsection{Analysis of embedded networks}
Theorem~\ref{thm:proper_subnetworks} in this section will tell us that instead of analyzing all $s$-square subnetworks $G_i$ of a network $\Net$, we can remove the outflow reactions from $G_i$ and then remove the outflow species from the remaining reactions, and analyze those simpler `embedded networks.' First we must define what is meant by removing a reaction or a species.
\begin{definition} \label{def:reduced_network}
Let $\Net=\{\SS,\CC,\RR\}$ be a \CRNNoSpace.
\begin{enumerate}
	\item Consider a subset of the species $S\subset \SS$, a subset of the complexes $C \subset \CC$, and a subset of the reactions $R\subset \RR$. 
\beit
\item 
The {\em restriction of $R$ to $S$}, denoted by $R|_S$, is the multiset of reactions obtained by taking the reactions in $R$ and removing all species not in $S$. 
\item The {\em restriction of $C$ to $R$}, denoted by $C|_R$, is the set of (reactant and product) complexes of the reactions in $R$.  
\item The {\em restriction of $S$ to $C$}, denoted by $S|_C$, is the set of species that are in the complexes in $C$. 
\enit
	
\item The network obtained from $\Net$ by {\em removing a subset of species} $\{X_i\} \subset \SS$ is the network 
		$$\left\{\SS\setminus \{X_i\} ,~\CC|_{\RR|_{\SS\setminus \{X_i\}}},~
			\RR|_{\SS\setminus \{X_i\}}  \right\}~.$$
	\item The network obtained from $\Net$ by {\em removing a set of reactions} $\{y \ra y'\} \subset \RR$ is the subnetwork 
		$$\left\{\SS|_{\CC|_{\RR \setminus \{y \ra y' \}}},~\CC|_{\RR \setminus \{y \ra y' \}},~\RR \setminus \{y \ra y' \} \right\}~.$$
\end{enumerate}
\end{definition}
\noindent
Note that $R|_S$ is viewed as a multiset (a generalized set in which an element may appear more than once), so that in the following definition, embedded networks are indeed square.

\begin{definition} \label{def:embeddednet}
	Let $\Net=\{\SS,\CC,\RR\}$ be a \CRN with $s$ species, and let $G$ be its non-flow subnetwork.~ 
	For $k=0,1,\dots,s$, a {\em $k$-square embedded network} of $\Net$ (or of $G$), which is defined by two sets, 
a $k$-subset of the species, $S=\left\{X_{i_1}, X_{i_2},\dots, X_{i_{k}} \right\}\subset \SS$,
and 
a $k$-subset of the non-flow reactions, $R=$ $\left\{ R_{j_1}, R_{j_2},\dots, R_{j_k} \right\}\subset$ $\RR$, that involve all species of $S$, 
is the $k$-square network $(S,\CC|_{R|_S},R|_S)$ consisting of the $k$ reactions $R|_S$. 
\end{definition}

The $1$-square embedded networks with negative orientation are precisely the reactions of the form $a X_j \rightarrow b X_j$ with $ 1 \leq a < b$.  Therefore, a network contains a negatively oriented $1$-square embedded reaction if and only if it contains a self-catalyzing reaction.

The following proposition will be used in the proofs of Theorems~\ref{thm:proper_subnetworks} and~\ref{thm:reduction}.
\begin{proposition} \label{prop:look_inside}
Let $N$ be a square network that contains one of the following:
	\begin{enumerate}
		\item an outflow reaction $X_i \ra 0$ or a generalized outflow reaction $a X_i \ra b X_i$ (where $0 \leq b < a$), or
		\item a species that appears in exactly one reaction, and the reaction is not self-catalyzing.
	\end{enumerate}
Then $\Or(N)=0$, or the network $\Nred$ obtained from $N$ by removing the aforementioned reaction and species is a square network and $\Or(N)=\Or(\Nred)$.
\end{proposition}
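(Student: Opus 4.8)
The plan is to reduce the network-level statement to the pivot calculus of Proposition~\ref{prop:pivot}, so that the whole proof comes down to a single Laplace expansion. Write $M=M_N$ and $R=R_N$, let $r$ index the distinguished reaction, and let $i$ index the distinguished species $X_i$; I will show that in both cases $M$ and $R$ share a \emph{positive} pivot in position $(r,i)$. In Case~1 the reaction $aX_i \to bX_i$ involves only $X_i$, so row $r$ of $M$ is $a e_i$ and row $r$ of $R$ is $(a-b)e_i$; since $a>0$ and $a>b\ge 0$, these are positive row pivots (part~5 of Proposition~\ref{prop:pivot}, which also records that the $R$-pivot is positive precisely because $a>b$). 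In Case~2, $X_i$ occurs in reaction $r$ alone, so column $i$ of each of $M,R$ can be nonzero only in row $r$. If $X_i$ is absent from the reactant complex of reaction $r$ then column $i$ of $M$ vanishes and $\det M=0$, whence $\Or(N)=0$ and we are in the first alternative; likewise if $X_i$ is unchanged by reaction $r$ then column $i$ of $R$ vanishes and $\Or(N)=0$. In the remaining situation $M_{ri}=y_{ri}>0$ and $R_{ri}=y_{ri}-y_{ri}'\ne 0$, and because reaction $r$ is not self-catalyzing we have $y_{ri}'\le y_{ri}$, forcing $R_{ri}>0$; thus $M_{ri},R_{ri}$ are positive column pivots (part~6 of Proposition~\ref{prop:pivot}).

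In either case part~7 of Proposition~\ref{prop:pivot} now applies and gives
\[ \Or(N) \;=\; \sign\!\left(\det M\,\det R\right) \;=\; \sign\!\left(\det M^{\widehat{ri}}\,\det R^{\widehat{ri}}\right), \]
the point being that expanding $\det M$ and $\det R$ along the shared pivot (a row in Case~1, a column in Case~2) introduces the same factor $(-1)^{r+i}$ in each, which cancels in the product; positivity of both pivots is what keeps the sign $+$. Next I would identify the struck matrices with the matrices of the reduced network: deleting reaction $r$ removes row $r$, and removing $X_i$ removes column $i$ and deletes $X_i$ from every surviving reaction. In Case~2 this last deletion is vacuous (no surviving reaction contains $X_i$), while in Case~1 the off-$r$ entries of column $i$ belong precisely to the reactions from which $X_i$ is genuinely excised; in both cases one reads off $M^{\widehat{ri}}=M_{\Nred}$ and $R^{\widehat{ri}}=R_{\Nred}$, where $\Nred$ is the network of Definition~\ref{def:reduced_network}.

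It then remains to split on whether the right-hand side above is zero. If $\det M^{\widehat{ri}}\,\det R^{\widehat{ri}}=0$ then $\Or(N)=0$, the first alternative; this is also where any degenerate reduction is absorbed, since a surviving species that no longer appears, or a reaction that collapses to $0\to 0$, forces a zero column or row in one of the struck matrices. Otherwise both determinants are nonzero, so $\Nred$ has full reactant-rank and full reaction-rank; by Lemma~\ref{lem:full_rk} every species of $\Nred$ is then a reactant species and no reaction is trivial, so $\Nred$ is a genuine $(n-1)$-square network with $\Or(\Nred)=\sign(\det M_{\Nred}\,\det R_{\Nred})=\Or(N)$, the second alternative.

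The step I expect to be the main obstacle is the bookkeeping in the second paragraph --- verifying that striking row $r$ and column $i$ genuinely reproduces the reactant and reaction matrices obtained from the network operations of Definition~\ref{def:reduced_network}, and that the only failures of $\Nred$ to be an honest square network are exactly those that send $\Or(N)$ to $0$. The pivot identification and the invocation of Proposition~\ref{prop:pivot} are otherwise routine.
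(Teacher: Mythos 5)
Your proposal is correct and follows essentially the same route as the paper: both cases are reduced to the pivot calculus of Proposition~\ref{prop:pivot} (positive row pivots via part~5 in Case~1, positive column pivots via part~6 in Case~2, then the sign identity of part~7), with Lemma~\ref{lem:full_rk} and zero determinants absorbing the degenerate situations. If anything, your write-up is more careful than the paper's own two-line proof, which glosses over the subcase where the species is unchanged by its unique reaction (so that the relevant column of $R$ vanishes) and loosely calls the Case~2 reaction ``pivotal'' even though it is a column, not a row, pivot.
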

\begin{proof}
Part 1 follows easily from Proposition~\ref{prop:pivot}, as such a reaction is a pivotal reaction.  Now, part 2 is the case when some species $X_j$ appears in exactly one reaction $y \ra y'$ in $N$. If $X_j$ is in the product complex $y'$, then $\Or(N)=0$ by Lemma~\ref{lem:full_rk}. In the remaining case ($X_j$ is in the reactant complex $y$), then $y \ra y'$ is a pivotal reaction, so  $\Or(N)=\Or(\Nred)$.
\end{proof}

The following lemma generalizes what we saw in Example~\ref{ex:1again} in the CFSTR setting: each $s$-square subnetwork $G_i$ of $\Net$ defines an embedded network $N_i$ of $G$, and they have the same orientation.

\begin{lemma} \label{lem:embed_orientation} 
Let $\Net$ be a \CRN with $s$ species,
and let $G$ denote the subnetwork of non-flow reactions.
Consider the map from $s$-square subnetworks $G_i$ of $\Net$ that do not contain inflow reactions to square embedded networks of $G$, which maps $G_i$ to the network $N_i$ obtained from $G_i$ by first removing all outflow reactions $X_{i_l} \rightarrow 0$ (for $l=1,2,\dots,s-k$) and then removing those outflow species $X_{i_l}$ from the remaining non-flow reactions of $G_i$.
Then this map is surjective, and 
\begin{align} \label{eq:same_or}
\Or(G_i) \quad =\quad \Or(N_i)~.
\end{align}
(If $N_i$ is empty, its orientation is defined to be +1.)
\end{lemma}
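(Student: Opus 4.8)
The plan is to establish the orientation identity \eqref{eq:same_or} first and then surjectivity. The key observation is that each outflow reaction $X_{i_l}\ra 0$ occurring in $G_i$ is a \emph{pivotal reaction} in the sense of Proposition~\ref{prop:pivot}(5): its reactant complex is $X_{i_l}$, so the corresponding row of $M_{G_i}$ is the standard basis vector $e_{i_l}$, and its reaction vector $y-y'$ equals $X_{i_l}$, so the corresponding row of $R_{G_i}$ is also $e_{i_l}$. Hence both $(M_{G_i})_{l,i_l}$ and $(R_{G_i})_{l,i_l}$ are positive row pivots, and Proposition~\ref{prop:pivot}(7) applies: deleting row $l$ and column $i_l$ from both matrices leaves $\Or$ unchanged (equivalently, one may invoke Proposition~\ref{prop:look_inside}(1)). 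In network terms, deleting column $i_l$ is precisely the operation of removing the outflow species $X_{i_l}$ from every remaining reaction, while deleting row $l$ removes the outflow reaction itself.

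First I would set up the bookkeeping. Write $s-k$ for the number of outflow reactions in $G_i$ (there are no inflow reactions by hypothesis), so the outflow species $X_{i_1},\dots,X_{i_{s-k}}$ are distinct and the remaining $k$ reactions are non-flow. Since $G_i$ is $s$-square it involves all $s$ species, and a non-outflow species cannot occur in any outflow reaction; hence each of the $k$ non-outflow species occurs in at least one of the $k$ non-flow reactions. This is exactly the condition in Definition~\ref{def:embeddednet} guaranteeing that the resulting $N_i$ is a genuine $k$-square embedded network of $G$. I then induct on the number of outflow reactions: after deleting the first outflow reaction and its species via Proposition~\ref{prop:pivot}(7), each remaining outflow reaction $X_{i_m}\ra 0$ still has reactant $X_{i_m}$ (the deleted species was different), hence is still a positive pivot, so the reduction repeats. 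After $s-k$ steps the surviving network has species set $S$ and reactions $R|_S$, which is exactly $N_i$, with $\Or$ preserved at every step; thus $\Or(G_i)=\Or(N_i)$.

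For surjectivity, given an arbitrary $k$-square embedded network $N=(S,\CC|_{R|_S},R|_S)$ of $G$ determined by a species set $S$ with $|S|=k$ and a set $R$ of $k$ non-flow reactions involving all species of $S$, I would exhibit the preimage $G_i$ consisting of the $k$ reactions of $R$ together with the $s-k$ outflow reactions $X_l\ra 0$ for $X_l\in\SS\setminus S$. Using that $\Net$ contains the outflow reactions of all species (as holds for a CFSTR), these reactions lie in $\Net$, and $G_i$ has $k+(s-k)=s$ reactions and involves all $s$ species (the complementary species enter through their outflow reactions and the species of $S$ through $R$), so $G_i$ is $s$-square and free of inflow reactions. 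Applying the map to $G_i$ removes exactly the outflow reactions and the species $\SS\setminus S$, returning $R|_S$ on $S$, namely $N$; hence the map is surjective.

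I expect the main obstacle to be bookkeeping rather than conceptual: verifying rigorously that the linear-algebra operation \emph{delete row $l$ and column $i_l$} matches the network operation \emph{remove $X_{i_l}\ra 0$ and delete $X_{i_l}$ from the remaining reactions}, and that after each deletion the hypotheses of Proposition~\ref{prop:pivot}(7) are regenerated for the next outflow reaction (distinctness of the outflow species is what makes this possible). Two points streamline the write-up: Proposition~\ref{prop:pivot}(7) yields the orientation identity even when $\Or=0$, so no degenerate case must be split off, and the CFSTR hypothesis enters only in the surjectivity half of the argument.
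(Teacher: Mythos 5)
Your proof is correct and follows essentially the same route as the paper's: the orientation identity \eqref{eq:same_or} is obtained by recognizing each outflow reaction as a pivotal reaction and deleting it together with its species via Proposition~\ref{prop:pivot}(7) (equivalently Proposition~\ref{prop:look_inside}), iterated over all $s-k$ outflows, while surjectivity comes from the natural correspondence between $s$-square inflow-free subnetworks and pairs consisting of a species subset and a non-flow reaction subset. Your write-up merely adds detail the paper compresses --- the induction bookkeeping, the check that $N_i$ is a genuine embedded network, and the explicit observation that surjectivity needs all outflow reactions to be present in $\Net$ (the CFSTR property), which the paper's one-line ``the surjection is clear'' leaves implicit.
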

\begin{proof}
The surjection is clear: both objects are defined by a subset of $\SS$ and a subset of $\RR_G$ whose cardinalities sum to $s$. Equation~\eqref{eq:same_or} follows from noting that the outflow reactions are pivotal reactions, and then applying Proposition~\ref{prop:look_inside}.
\end{proof}

From Lemma~\ref{lem:embed_orientation}, we obtain the following theorem, which will be used in Step~III in Algorithm~\ref{algor:JC}.
\begin{theorem} \label{thm:proper_subnetworks}
For a CFSTR $\Net$, with $G$ its subnetwork of non-flow reactions, the following are equivalent:
\begin{enumerate}
	\item $\Net$ passes the Jacobian Criterion.
	\item Each square embedded network of $G$ has non-negative orientation.
	\item $G$ has no self-catalyzing reaction, and each square embedded network $N$ of $G$ which satisfies the following properties has non-negative orientation:
	\begin{itemize}
		\item $N$ has no outflows, inflows, generalized inflow reactions $0 \rightarrow \sum_i a_i X_i$, or generalized outflows $a X_i \ra b X_i$ (where $0 \leq b \leq a$). 
		\item Each species appears in at least two reactions (where a pair of reversible reactions is counted only once).
	\end{itemize}
\end{enumerate}
\end{theorem}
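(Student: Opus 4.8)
The plan is to establish the two equivalences $(1)\Leftrightarrow(2)$ and $(2)\Leftrightarrow(3)$, with essentially all of the work concentrated in the implication $(3)\Rightarrow(2)$. For $(1)\Leftrightarrow(2)$ I would combine the definition of passing the Jacobian Criterion with part~2 of Lemma~\ref{lem:CracFein_subnetwork}: a CFSTR $\Net$ passes the criterion exactly when every $s$-square subnetwork $G_i$ of $\Net$ containing no inflow reaction has $\Or(G_i)\geq 0$. Lemma~\ref{lem:embed_orientation} supplies a surjection $G_i\mapsto N_i$ from these subnetworks onto the square embedded networks of $G$ with $\Or(G_i)=\Or(N_i)$. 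Because this map is both surjective and orientation-preserving, the set of values $\{\Or(G_i)\}$ equals the set $\{\Or(N)\}$ taken over all square embedded networks $N$ of $G$; hence every $G_i$ is non-negatively oriented if and only if every embedded network is, which is precisely $(1)\Leftrightarrow(2)$.

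For $(2)\Rightarrow(3)$ I would note first that the networks singled out in (3) form a subclass of all square embedded networks of $G$, so their non-negative orientation is immediate from (2). To obtain the no-self-catalysis clause, observe that a self-catalyzing reaction $y\to y'$ of $G$ with self-catalyst $X_j$ (so $1\leq y_j<y_j'$) determines, through $S=\{X_j\}$ and $R=\{y\to y'\}$, a $1$-square embedded network $y_jX_j\to y_j'X_j$ of negative orientation, contradicting (2). Thus (2) forces $G$ to be free of self-catalyzing reactions, which is the remaining content of (3).

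The implication $(3)\Rightarrow(2)$ is the crux, and I would prove it by strong induction on the number $k$ of reactions in a square embedded network $N$ of $G$, the empty network ($k=0$) having orientation $+1$ by convention. Two preliminary observations drive the argument: restriction to a subset of species cannot manufacture a self-catalyst, so the hypothesis that $G$ has no self-catalyzing reaction descends to every embedded network $N$; and by Lemma~\ref{lem:full_rk}, if $N$ contains a reversible pair or a (generalized) inflow $0\to\sum_i a_iX_i$, then $\det(M_N)=0$ or $\det(R_N)=0$, whence $\Or(N)=0$. For the inductive step I would split into cases. If $N$ contains a reversible pair or a (generalized) inflow, then $\Or(N)=0$. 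If $N$ contains a (generalized) outflow $aX_i\to bX_i$ with $b<a$, or a species appearing in exactly one reaction, then—using that $N$ has no self-catalyzing reaction—Proposition~\ref{prop:look_inside} applies and yields either $\Or(N)=0$ or $\Or(N)=\Or(\Nred)$, where $\Nred$ deletes the offending reaction and species. If none of these holds, then $N$ has no reversible pair, no flow or generalized-flow reaction, and every species lies in at least two reactions, so $N$ belongs to the restricted class of (3) and $\Or(N)\geq 0$ by hypothesis.

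I expect the main obstacle to be the bookkeeping that lets the reduced network $\Nred$ re-enter the induction. I would verify that $\Nred$ is again a square embedded network of $G$: in the generalized-outflow case the deleted reaction involves only $X_i$, so deleting it and $X_i$ leaves every remaining species involved and yields an embedded network with $k-1$ reactions; in the single-reaction-species case the deleted reaction may strand another species, but then $\det(M_{\Nred})=0$, so $\Or(\Nred)=0\geq 0$ and the step concludes directly. In the genuine embedded case $\Nred$ has $k-1$ reactions and no self-catalyzing reaction, so the inductive hypothesis gives $\Or(\Nred)\geq 0$ and therefore $\Or(N)=\Or(\Nred)\geq 0$. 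Assembling the three implications closes the equivalence.
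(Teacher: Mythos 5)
Your proposal is correct and takes essentially the same route as the paper's proof: $(1)\Leftrightarrow(2)$ via Lemmas~\ref{lem:CracFein_subnetwork} and~\ref{lem:embed_orientation}, the self-catalyzing $1$-square observation for $(2)\Rightarrow(3)$, and reduction of any embedded network violating the conditions of~$(3)$ via Proposition~\ref{prop:look_inside} for $(3)\Rightarrow(2)$. The only difference is that you spell out what the paper compresses into a single appeal to Proposition~\ref{prop:look_inside}---the explicit strong induction, the reversible-pair case (disposed of by Lemma~\ref{lem:full_rk}), the descent of the no-self-catalysis hypothesis to embedded networks, and the stranded-species bookkeeping---so your write-up is a faithful, somewhat more careful rendering of the same argument.
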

\begin{proof}
Parts 1 and 2 are equivalent by Lemma~\ref{lem:CracFein_subnetwork} and Lemma~\ref{lem:embed_orientation}.  Clearly, part~2 implies part~3, so it remains only to show the converse.  It suffices to show that if $N$ is a square embedded network which violates one of the conditions of part~3, then either it has zero orientation or we can find a `smaller' embedded network which satisfies part 3 and has the same orientation as $N$.  In the case of inflows and generalized inflows, the orientation of $N$ is zero.  All remaining cases are handled by Proposition~\ref{prop:look_inside}.
\end{proof}

The following corollary follows easily from Theorem~\ref{thm:proper_subnetworks}.
\begin{corollary} \label{cor:1emb}
If a CFSTR contains a self-catalyzing reaction, then it fails the Jacobian Criterion.  Conversely, if a CFSTR does not contain a self-catalyzing reaction, then the only square embedded networks in Theorem~\ref{thm:proper_subnetworks} that must be checked are those of size at least two.
\end{corollary}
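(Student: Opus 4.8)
The statement to prove is Corollary~\ref{cor:1emb}, which has two parts. The plan is to derive both directly from Theorem~\ref{thm:proper_subnetworks} together with the characterization of negatively-oriented $1$-square embedded networks noted just before Proposition~\ref{prop:look_inside}.

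For the first part, I would argue as follows. Suppose the CFSTR $\Net$ contains a self-catalyzing reaction; call it $y \ra y'$, so some species $X_j$ has a strictly larger stoichiometric coefficient in $y'$ than in $y$. The key observation recorded earlier is that the $1$-square embedded networks of negative orientation are exactly the reactions of the form $a X_j \ra b X_j$ with $1 \le a < b$, and a network admits such an embedded reaction precisely when it contains a self-catalyzing reaction. Thus I would take $S = \{X_j\}$ and $R = \{y \ra y'\}$, and restrict to obtain the $1$-square embedded network $a X_j \ra b X_j$ with $a < b$, whose orientation is $-1$. This is a square embedded network of $G$ with negative orientation, so condition~2 of Theorem~\ref{thm:proper_subnetworks} fails, and hence $\Net$ fails the Jacobian Criterion.

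For the converse part, I would invoke the equivalence of conditions~2 and~3 in Theorem~\ref{thm:proper_subnetworks}. If $\Net$ contains no self-catalyzing reaction, then $G$ contains no self-catalyzing reaction either (the non-flow subnetwork is a subnetwork of $\Net$), so the first clause of condition~3 is automatically satisfied. It then remains to observe that the embedded networks that condition~3 singles out for checking all have size at least two: the restrictions rule out outflows, inflows, generalized inflows and outflows $aX_i \ra bX_i$ with $0 \le b \le a$, and these restrictions already eliminate every $1$-square embedded network. Indeed, any $1$-square embedded network is a single reaction on a single species, hence of the form $aX_j \ra bX_j$; the non-self-catalyzing hypothesis forces $b \le a$, which is exactly one of the excluded generalized-outflow forms. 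Therefore no $1$-square embedded network survives the conditions of part~3, and the only embedded networks that must be checked have size $\ge 2$.

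I do not anticipate a genuine obstacle here, since the corollary is a direct packaging of Theorem~\ref{thm:proper_subnetworks}. The one point requiring care is the bookkeeping in the converse: I must confirm that a $1$-square embedded network can only be a generalized reaction $aX_j \ra bX_j$ (it has one species and one reaction, and the single species must appear in the single reaction by the definition of embedded network), and then that the hypothesis of no self-catalysis places it among the excluded forms rather than leaving a $1$-square case that still needs checking. Making explicit that ``self-catalyzing'' for the full network descends to $G$ and to each embedded restriction is the only subtlety, and it follows immediately from the definitions of restriction and embedded network.
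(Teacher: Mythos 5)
Your proposal is correct and takes essentially the same route as the paper, which offers no written proof beyond the remark that the corollary ``follows easily from Theorem~\ref{thm:proper_subnetworks}'': you obtain the first part from condition~2 of that theorem via the negatively oriented $1$-square embedded network $aX_j \ra bX_j$ ($1 \le a < b$) sitting inside any self-catalyzing reaction, and the second part from the exclusions built into condition~3. One small imprecision: for a $1$-square embedded network $aX_j \ra bX_j$, the absence of self-catalysis forces $b \le a$ only when $a \ge 1$; when $a = 0$ (e.g., embedding $B \ra A$ onto the species $A$) the network is a generalized inflow, which is also excluded by condition~3, so your conclusion that no $1$-square embedded network needs checking still stands.
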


\noindent
Corollary~\ref{cor:1emb} allows us to see quickly by inspection that the network introduced in Example~\ref{ex:diffEq} (and analyzed in Examples~\ref{ex:Jac}, \ref{ex:1}, and~\ref{ex:1again}) fails the Jacobian Criterion.

\subsection{A network reduction theorem} \label{sec:reduction}
This subsection concerns those reactions or species that can be removed from a CFSTR without affecting whether the network passes the Jacobian Criterion.  The following theorem will be the foundation of Step~II of Algorithm~\ref{algor:JC}.
\begin{theorem} \label{thm:reduction}
Let $\Net$ be a CFSTR,  and let $\Net^{\rm red}$ be a network obtained from $\Net$ by performing any of the following operations:
\begin{enumerate}
	\item Remove any inflow or outflow reactions. 
	\item Remove any generalized inflow reactions $0 \rightarrow \sum_i a_i X_i $.
	\item Remove reactions containing only one species $a X_i \rightarrow b X_i$ for which $0 \leq b < a$.  
	\item Remove any species that is not a reactant species.
	\item Remove any species that appears in exactly one reaction (or a pair of reversible reactions), unless the species is a self-catalyst of the reaction.
\end{enumerate}
Then $\Net$ passes the Jacobian Criterion if and only if all square embedded networks of $\Net^{\rm red}$ have nonnegative orientation.
\end{theorem}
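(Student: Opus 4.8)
The plan is to reduce the statement to a comparison of square embedded networks and then dispatch the five operations one at a time. By parts~1 and~2 of Theorem~\ref{thm:proper_subnetworks}, a CFSTR $\Net$ passes the Jacobian Criterion if and only if every square embedded network of its non-flow subnetwork has non-negative orientation. Moreover, by Definition~\ref{def:embeddednet} a square embedded network is specified by a subset of species together with a subset of \emph{non-flow} reactions, so these embedded networks depend only on the non-flow reactions and the reactant structure, not on the flow reactions. Hence it suffices to prove that each operation~(1)--(5) preserves the property that every square embedded network of the network has non-negative orientation; call this property $(\ast)$. Since the operations may be applied in sequence and each preserves $(\ast)$, the general case follows by induction on the number of operations performed, using that passing the Jacobian Criterion is equivalent to $(\ast)$ for $\Net$ itself.

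To show that a single operation preserves $(\ast)$, I would set up an orientation-preserving correspondence between the embedded networks of $\Net$ and those of $\Net^{\rm red}$. In one direction, every square embedded network of $\Net^{\rm red}$ canonically lifts to a square embedded network of $\Net$ with the same orientation: one simply declines to use the deleted reaction or species, and the restriction of a reaction to a species subset not containing the removed species is unaffected by the deletion. Thus $(\ast)$ for $\Net$ immediately gives $(\ast)$ for $\Net^{\rm red}$. For the converse I would take an arbitrary square embedded network $N$ of $\Net$: if $N$ uses only reactions and species that survive in $\Net^{\rm red}$, it is already an embedded network of $\Net^{\rm red}$ and hence has non-negative orientation by hypothesis. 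The crux is therefore to show that every embedded network $N$ of $\Net$ that does involve a deleted reaction or species either has $\Or(N)=0$ or has the same orientation as a genuine embedded network of $\Net^{\rm red}$.

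The five operations split according to which tool resolves this last point. Operation~(1) is immediate, since flow reactions never appear in any embedded network, so the two families coincide. For operation~(2), any embedded network using a generalized inflow $0 \ra \sum_i a_i X_i$ has an all-zero row in its reactant matrix (the restricted reactant complex is still empty), forcing orientation zero by Lemma~\ref{lem:full_rk}; the surviving embedded networks are unchanged. Operation~(4) is dual: a non-reactant species contributes an all-zero column to the reactant matrix of any embedded network containing it, again forcing orientation zero, while embedded networks avoiding the species are unaffected by its deletion. Operations~(3) and~(5) are exactly the two cases of Proposition~\ref{prop:look_inside}: a reaction $aX_i \ra bX_i$ with $0\le b<a$ is a pivotal reaction by Proposition~\ref{prop:pivot}, and a species appearing in exactly one non-flow reaction and not as a self-catalyst falls under part~2 of that proposition. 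In each case Proposition~\ref{prop:look_inside} gives that an embedded network $N$ invoking the deleted reaction or species either has orientation zero (for instance when the chosen species lies outside the species subset, so its reaction restricts to the empty reaction $0 \ra 0$) or reduces to a square network $N^{\rm red}$ with $\Or(N)=\Or(N^{\rm red})$, and I would verify that this $N^{\rm red}$ is precisely an embedded network of $\Net^{\rm red}$.

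The main obstacle is bookkeeping rather than any deep idea: one must check carefully that the reduced network produced by Proposition~\ref{prop:look_inside} is genuinely an embedded network of $\Net^{\rm red}$ and not merely an abstract square network. This is most delicate for operation~(5), where $\Net^{\rm red}$ is obtained by removing only the species $X_j$ (shrinking its reactions), whereas Proposition~\ref{prop:look_inside} removes both $X_j$ and the unique reaction containing it; the two are reconciled by noting that, since $X_j$ occurs in exactly one non-flow reaction, all other reactions are literally unchanged by the deletion and hence survive verbatim in $\Net^{\rm red}$. Two further technical points must be handled: when the deleted species is not selected one checks that restricting a reaction to the chosen species set commutes with deleting the species, so the embedded network is common to $\Net$ and $\Net^{\rm red}$; and when a pair of reversible reactions is involved, an embedded network selecting both directions has a row and its negative in its reaction matrix, hence orientation zero by Lemma~\ref{lem:full_rk}, so only the single-reaction case of Proposition~\ref{prop:look_inside} is relevant. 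With these checks in place, each operation preserves $(\ast)$ and the theorem follows.
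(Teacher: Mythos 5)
Your proposal is correct and follows essentially the same route as the paper's proof: reduce via Theorem~\ref{thm:proper_subnetworks} to comparing square embedded networks of $\Net$ and $\Net^{\rm red}$, kill the generalized-inflow and non-reactant-species cases with Lemma~\ref{lem:full_rk}, and handle the remaining cases with Proposition~\ref{prop:look_inside}. Your treatment is in fact a bit more careful than the paper's on the bookkeeping (explicit induction over operations, the reversible-pair and $0 \rightarrow 0$ restriction cases), but the decomposition and key lemmas are identical.
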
 
\begin{proof}
Let $G$ (respectively, $\Gred$) denote the subnetwork of non-flow reactions of $\Net$ (respectively, $\Net^{\rm red}$).
By Theorem~\ref{thm:proper_subnetworks}, we need only show that all square embedded networks of $\Net$ have non-negative orientation if and only if all square embedded networks of $\Gred$ do.  
First, we consider the operations $1$ through $3$, in which $\Net^{\rm red}$ is obtained from $\Net$ by removing a reaction $y \ra y'$.  In this case, 
	\begin{align} \label{eq:rmvRxn}
	 \{ \text{SENs of } G \} ~ = ~  
		 \{ \text{ SENs of } G^{\rm red} \}  
		~ \cup ~ \{ \text{ SENs of } G \text{ that involve } y \ra y' \} ~,
	\end{align}
where `SEN' is shorthand for `square embedded network.'  
So, we need only show that if $N_i$ is a \sen of $G$, then either $N_i$ has zero orientation, or there exists a \sen $N^i_{\rm red}$ of $G^{\rm red}$ which has the same orientation as $N_i$.  In the case that the reaction $y \ra y'$ is an inflow or a generalized inflow, $\Or(N_i)=0$.  So, the remaining cases are when $y \ra y'$ is an outflow reaction $X_j \ra 0$ or is of the form $a X_j \rightarrow b X_j$  (where $1 \leq b < a$); this is handled by Proposition~\ref{prop:look_inside}. 

Now, only parts 4 and 5 remain to be proven.  Here, $\Net^{\rm red}$ is obtained from $\Net$ by removing a species $X_j$.  The analogue to~\eqref{eq:rmvRxn} now is that the set of \sens of $\Gred$ are precisely the \sens of $G$ that do not involve species $X_j$.  Again, we must show that a \sen $N_i$ of $G$ either has zero orientation, or there exists a \sen $N^i_{\rm red}$ of $G^{\rm red}$ with the same orientation as $N_i$.  If $X_j$ is not a reactant species of $G$ (in the case of part 4), then $N_i$ has zero orientation by Lemma~\ref{lem:full_rk}.  Finally, if $X_j$ satisfies the condition in part 5, then $X_j$ appears in exactly one reaction $y \ra y'$ in $N_i$.  This case is completed by applying Proposition~\ref{prop:look_inside}. This completes the proof.
\end{proof}

\section{Networks with total molecularity at most two do not admit multiple steady states} \label{sec:TM2}
In this section, we prove the following theorem, which concerns networks in which the maximum total molecularity (Definition~\ref{def:reversible_and_TM}) is at most two.  
\begin{theorem} \label{thm:total_mol_2}
If all species of a CFSTR have total molecularity less than or equal to two, then it passes the Jacobian Criterion, and therefore does not admit multiple steady states.  
\end{theorem}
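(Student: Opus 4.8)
The plan is to use Theorem~\ref{thm:proper_subnetworks}, specifically the equivalence between passing the Jacobian Criterion and part~3: it suffices to show that every square embedded network $N$ of the non-flow subnetwork $G$ has non-negative orientation, and moreover we need only examine those $N$ satisfying the stringent reduced conditions of part~3. The first and most important reduction is that if all species have total molecularity at most two, then $G$ can contain no self-catalyzing reaction. Indeed, a self-catalyst appears in both the reactant and product complex of a single reaction with strictly larger coefficient in the product, so its two stoichiometric contributions from that one reaction already sum to at least $1+2=3 > 2$. Thus by Corollary~\ref{cor:1emb} all $1$-square embedded networks are fine, and we may restrict attention to embedded networks $N$ of size $k \geq 2$ in which every species appears in at least two (counting reversible pairs once) of the reactions of $N$.

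**The key structural step** is to bound the total molecularity within an embedded network. If $X_i$ has total molecularity at most two in $\Net$ and appears in at least two distinct (non-flow, reversible-pair-counted-once) reactions of $N$, then each appearance contributes at least $1$ to $\op{TM}(X_i)$, so its coefficient in each of those reactions must be exactly $1$, and $X_i$ cannot appear simultaneously as reactant and product of any reaction of $N$ (that would already use up a molecularity of two in a single reaction, contradicting appearance in a second reaction). Consequently, in the reactant matrix $M_N$ and reaction matrix $R_N$, every species-column records $X_i$ appearing with coefficient $1$ as a pure reactant or pure product in exactly two reactions, and with coefficient $0$ elsewhere. I would then argue that each such species is \emph{changed} in exactly the reactions in which it appears, so that the nonzero pattern of $R_N$ in each column has exactly two entries, each of absolute value $1$, with appropriate signs ($+1$ if reactant, $-1$ if product). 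The matrices $M_N$ and $R_N$ therefore have a highly constrained combinatorial form: $M_N$ is a $0/1$ matrix with exactly two ones per column, and $R_N$ is a $\{-1,0,+1\}$ matrix likewise with two nonzeros per column.

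**The main obstacle** is proving that for matrices of this constrained shape one always has $\det(M_N)\det(R_N) \geq 0$, i.e. $\Or(N) \geq 0$. I expect to handle this by a direct sign analysis: a $0/1$ matrix with exactly two ones per column is (up to relabeling) the vertex-edge incidence structure of a graph, and its signed counterpart $R_N$ is an oriented incidence matrix; the product of determinants should factor through the combinatorics of this graph. Specifically, I would show that whenever $\det(M_N) \neq 0$ and $\det(R_N) \neq 0$ (otherwise $\Or(N)=0$ and we are done), the two determinants are computed by summing over the same permutations, and the sign contributed by each surviving permutation is identical in the two expansions because flipping a reactant/product assignment flips the sign in $R_N$ in a way that is compensated across rows. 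The cleanest route is probably to observe that each reaction of $N$ involves at most two species (since the species appearing in it have coefficient $1$ and molecularity constraints limit the complex sizes), reducing $M_N$ and $R_N$ to matrices with at most two nonzero entries per \emph{row} as well, and then to induct on $k$ by locating a pivotal row or column (a reaction or species touching only one other) and invoking Proposition~\ref{prop:pivot}(7) together with Proposition~\ref{prop:look_inside}.

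**Finally,** I would close the induction: if no pivotal row or column exists, then every species and every reaction of $N$ has exactly two incidences, forcing $M_N$ (hence the underlying graph) to be a disjoint union of cycles. For a single cycle the determinants $\det(M_N)$ and $\det(R_N)$ can be computed explicitly and shown to have equal sign, yielding $\Or(N)=\sign(\det(M_N)\det(R_N)) \geq 0$; disjoint cycles multiply and preserve the sign. I anticipate that the cycle computation is where the genuine content lies, since it is the one case that does not immediately collapse under the pivot reductions, so I would treat it as the base obstacle and verify it by a parity argument on the cycle length together with the reactant/product orientation of its edges.
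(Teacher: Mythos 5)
Your overall skeleton is the same as the paper's: reduce via Theorem~\ref{thm:proper_subnetworks} to square embedded networks (which inherit the total-molecularity bound), use the bound to force every reaction of such a network $N$ to have the form $A+B \to 0$ or $A \to B$ with every species appearing in exactly two reactions, observe that $N$ is then a disjoint union of cycles, and finish with an explicit determinant computation on a cycle (the paper's Lemma~\ref{lem:pseudo_at-most-2} followed by Lemma~\ref{lem:det_pseudo}). However, two of your concrete claims are wrong, and the second sits exactly at the step you yourself identify as ``where the genuine content lies,'' so it is a genuine gap rather than a cosmetic one. First, $M_N$ is \emph{not} a $0/1$ matrix with exactly two ones per column: the reactant matrix records only reactant-side occurrences, so a species that is a product in one of its two reactions contributes a single $1$ to its column, and a species that is a product in both contributes a zero column (in which case $\Or(N)=0$). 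Only $R_N$ has two nonzero entries in every column. Second, your proposed mechanism --- that $\det(M_N)$ and $\det(R_N)$ ``are computed by summing over the same permutations, and the sign contributed by each surviving permutation is identical in the two expansions'' --- is false, precisely because the supports differ: a reaction $A \to B$ puts one nonzero in its row of $M_N$ but two in its row of $R_N$. For the $3$-cycle $\{A+B \to 0,\ B+C \to 0,\ C \to A\}$ (an embedded network of network (iv) of Table~\ref{table:ME1}), one has $\det(M_N)=1$ but $\det(R_N)=0$: the cyclic permutation contributes $0$ to the first expansion and $-1$ to the second. The product is still nonnegative, but not for your reason.

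The repair is what the paper actually does. Since $\Or(N)\neq 0$ may be assumed, $\det(M_N)\neq 0$, which forces one of the two alternating matchings of the species--reaction incidence cycle to use only reactant edges; relabel along that traversal so that the species shared by reactions $i-1$ and $i$ is a reactant of reaction $i$. Then \emph{both} $M_N$ and $R_N$ have all diagonal entries $+1$ and their only other nonzero entries on the superdiagonal (mod $n$), and Lemma~\ref{lem:det_pseudo} shows each determinant separately equals $1 \pm \prod_i Q_{i,i+1} \in \{0,1,2\}$. Each factor is nonnegative on its own; no permutation-by-permutation comparison between the two matrices is needed, nor is one possible. A smaller gap: your parenthetical justification that each reaction of $N$ involves at most two species (``molecularity constraints limit the complex sizes'') is not an argument --- a reaction $A+B+C \to D$ gives every species coefficient $1$. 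The correct reason is a count, and you already have the ingredients: every species of $N$ appears in exactly two reactions, so a $k$-square $N$ has exactly $2k$ species--reaction incidences, while every reaction contains at least two distinct species (single-species reactions being excluded by your reduction), so every reaction contains exactly two. That is precisely how the paper proves Lemma~\ref{lem:pseudo_at-most-2}.
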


Theorem~\ref{thm:total_mol_2} has the following corollary.
\begin{corollary} \label{cor:total_mol_2}
For any weakly-reversible \CRN (not necessarily a CFSTR) in which all species have total molecularity less than or equal to two, the network does not admit multiple steady states.
\end{corollary}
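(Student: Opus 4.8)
The plan is to reduce the corollary to Theorem~\ref{thm:total_mol_2} by passing to the CFSTR obtained from the given network by adjoining all missing outflow reactions. Concretely, given a weakly-reversible \CRN $N$ in which every species has total molecularity at most two, I would form the augmented network $\tilde N$ (in the notation introduced earlier), which includes all outflow reactions $X_i \ra 0$ and is by construction a CFSTR. The strategy has three steps: first verify that $\tilde N$ still satisfies the total-molecularity hypothesis; second apply Theorem~\ref{thm:total_mol_2} to conclude that $\tilde N$ passes the Jacobian Criterion and hence is not multistationary; third transfer this conclusion back to the original network $N$.

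The first step is the essential observation, and it is immediate from the definition of total molecularity (Definition~\ref{def:reversible_and_TM}): the quantity $\op{TM}(X_i)$ is a sum over the \emph{non-flow} reactions only, so flow reactions contribute nothing to it. Passing from $N$ to $\tilde N$ adjoins only outflow reactions, which are flow reactions, and therefore leaves the set of non-flow reactions---and with it every value $\op{TM}(X_i)$---unchanged. Thus each species of $\tilde N$ again has total molecularity at most two, and Theorem~\ref{thm:total_mol_2} applies verbatim to the CFSTR $\tilde N$, yielding that $\tilde N$ precludes multiple steady states.

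The remaining step, transferring non-multistationarity from the CFSTR $\tilde N$ down to the subnetwork $N$, is where I expect the real content to lie, and it is precisely the point at which weak-reversibility of $N$ is needed. Here I would invoke the result of Craciun and Feinberg \cite[Corollary~8.3]{ME3}, which guarantees that if a weakly-reversible network becomes injective after it is completed to a CFSTR by adjoining the missing outflows, then the original network itself admits no multiple positive steady states. Applying this with the hypothesis that $N$ is weakly-reversible closes the argument. The main obstacle is thus not a computation but the correct bookkeeping at the interface between the present results and the external transfer theorem: I must ensure that ``augmenting by missing outflows'' in the sense of \cite{ME3} agrees with the construction of $\tilde N$ used above, and that the weak-reversibility hypothesis stated in the corollary is exactly what Corollary~8.3 requires. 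Once these hypotheses are matched, the corollary follows with no further work.
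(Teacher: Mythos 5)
Your proposal is correct and follows essentially the same route as the paper: augment the network by all outflows to obtain the CFSTR $\tilde N$, apply Theorem~\ref{thm:total_mol_2} (noting that total molecularity is unchanged since it counts only non-flow reactions), and then invoke the Craciun--Feinberg transfer result \cite[Corollary~8.3]{ME3} using weak-reversibility. The only difference is cosmetic: you spell out the preservation of the total-molecularity bound, which the paper leaves implicit, and the paper additionally cites \cite[Theorem~8.2]{ME3} alongside Corollary~8.3.
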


\begin{remark}
Theorem~\ref{thm:total_mol_2} cannot be improved to higher total molecularity.  For example, a CFSTR that contains the reaction $A\rightarrow 2A$ fails the Jacobian Criterion (by Corollary~\ref{cor:1emb}).
\end{remark}

\begin{remark}
Theorem~\ref{thm:total_mol_2} can be proven by applying the Species-Reaction Graph Theorem of Craciun and Feinberg~\cite[Theorem~1.1]{ME2}.  Rather than introducing the terminology and results of species-reaction graphs, we give a direct proof here.
\end{remark}

Our proof of Theorem~\ref{thm:total_mol_2} will proceed by showing that for a \CRN that has total molecularity at most two, if a square embedded network has nonzero orientation, then it in fact has positive orientation.  The proof requires the following lemmata, which will be used to reduce to studying  square embedded networks, and then to elucidate what values can be taken by the determinants of the reactant and reaction matrices of such networks.  

\begin{lemma} \label{lem:total_mol_embedded networks}
Let $\Net$ be a \CRN whose species all have total molecularity less than or equal to some $m\in\Z_{>0}$.  Then the same holds for any embedded network of $\Net$.
\end{lemma}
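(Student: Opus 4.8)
The plan is to reduce the statement to two elementary operations and to show that the total molecularity of a surviving species cannot increase under either one. First I would note that $\op{TM}$ depends only on the non-flow reactions, so it is computed identically in $\Net$ and in its non-flow subnetwork $G$; hence it suffices to work with $G$. By Definition~\ref{def:embeddednet}, an embedded network of $G$ is obtained by (a) passing to a subset $R$ of the non-flow reactions and then (b) deleting from those reactions all species outside a chosen subset $S$. Since any embedded network arises from $G$ by a finite sequence of single-reaction deletions followed by single-species deletions, it is enough to prove that for each surviving species $X_i$ the quantity $\op{TM}(X_i)$ does not increase when one removes a single directed reaction, or removes a single species $X_j \neq X_i$.

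The key bookkeeping device I would introduce is, for a directed reaction $\rho\colon y\ra y'$, the non-negative integer $c_i(\rho):=y_i+y_i'$ recording the combined stoichiometric coefficient of $X_i$ in $\rho$. Viewing the non-flow reactions as a multiset of directed reactions, one has the identity
$$\op{TM}(X_i)\;=\;\sum_{\rho} c_i(\rho)\;-\;\sum_{\{\rho,\bar\rho\}} c_i(\rho),$$
where the first sum runs over all directed non-flow reactions and the second over reversible pairs $\{\rho,\bar\rho\}$ (each counted once, with $c_i(\rho)=c_i(\bar\rho)$). This merely encodes that a reversible pair should contribute $c_i$ once rather than twice; for multisets the correction term is the obvious generalization in terms of the minimum of the two multiplicities.

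With this identity in hand, removing a reaction is straightforward. If the reaction removed is irreversible, the first sum drops by $c_i(\rho)$ and the correction is unchanged, so $\op{TM}(X_i)$ decreases (or stays equal); if it belongs to a reversible pair, the first sum drops by $c_i(\rho)$ but the correction also drops by $c_i(\rho)$, since the pair is destroyed and its partner becomes irreversible, so $\op{TM}(X_i)$ is unchanged. The crucial observation here is that a reversible pair and a lone irreversible reaction contribute the identical amount $c_i(\rho)$. Removing a species $X_j\neq X_i$ is handled by the same identity: because $X_i\in S$, every $c_i(\rho)$ is untouched by the deletion of $X_j$, so the first sum is preserved, while erasing $X_j$ symmetrically from a reaction and its reverse preserves every existing reversible pair and may create new ones (two formerly unrelated reactions can become reverses once $X_j$ is deleted), so the correction can only grow; any reaction that degenerates to $y|_S = y'|_S$ only removes a term. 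Hence $\op{TM}(X_i)$ cannot increase under either operation.

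I expect the main obstacle to be precisely this reversibility accounting, since reversibility is a global property of the reaction multiset and is not stable under a naive ``count occurrences'' intuition: deleting one arrow of a reversible pair, or deleting a species so that two reactions collapse into a reverse pair, both change how many reaction units there are. The displayed identity is what renders these effects transparent and shows they never work against the bound. Composing the two monotonicity statements over all deleted reactions and species then yields $\op{TM}_N(X_i)\le\op{TM}_\Net(X_i)\le m$ for every species $X_i$ of an arbitrary embedded network $N$, which is the assertion.
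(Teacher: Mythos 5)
Your proof is correct. For comparison: the paper does not actually write out an argument for this lemma --- it simply asserts that the result ``follows immediately from the definitions of total molecularity and embedded networks.'' What you have produced is a rigorous justification of that assertion: your decomposition into single-reaction deletions followed by single-species deletions (which matches the two stages in Definition~\ref{def:embeddednet}), together with the identity $\op{TM}(X_i)=\sum_{\rho}c_i(\rho)-\sum_{\{\rho,\bar\rho\}}c_i(\rho)$, is exactly the bookkeeping that the word ``immediately'' suppresses. In particular, you correctly isolate the one genuinely non-obvious point, namely that reversibility is not preserved naively under the two operations: deleting one arrow of a reversible pair trades the pair for a lone irreversible reaction (contribution to $\op{TM}$ unchanged, since the paper counts a reversible pair once), while deleting a species can cause two previously unrelated reactions to collapse into a reversible pair (contribution can strictly drop). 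Both effects push in the favorable direction, which is why the lemma holds, but a reader who merely ``counts occurrences'' would miss the second effect entirely. The only point I would flag, as hygiene rather than a gap: the intermediate objects in your chain of deletions need not be chemical reaction networks in the sense of Definition~\ref{def:crn} (a species may temporarily appear in no reaction, and a restricted reaction may degenerate to $y|_S=y'|_S$); this is harmless because your formula for $\op{TM}$ makes sense for any finite multiset of directed reactions, and you do observe that degenerate reactions only delete terms, but it is worth stating explicitly that the monotonicity claims are about that formula rather than about networks per se.
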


\noindent
The proof of Lemma~\ref{lem:total_mol_embedded networks} follows immediately from the definitions of total molecularity and embedded networks.
\begin{lemma} \label{lem:pseudo_at-most-2}
Let $N$ be a coupled $s$-square network with at least two reactions (none of which are inflows or outflows) such that all species have total molecularity at most two.  Assume that $N$ has nonzero orientation.  Then each reaction of $N$ has the form of $A+B \rightarrow 0$ or $A \rightarrow B$.  
\end{lemma}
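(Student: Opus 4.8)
The plan is to extract strong structural constraints from the two nonzero determinants and from the molecularity bound, and then to finish with a short counting argument reinforced by the coupling hypothesis. First I would record what nonzero orientation buys us: it means $N$ has both full reactant-rank and full reaction-rank, so Lemma~\ref{lem:full_rk} applies. In particular every species is a reactant species (so its reactant-molecularity is at least $1$), every species is changed in some reaction, no two reactions share a reactant complex, and $N$ contains no reaction together with its reverse. The last fact matters because it means the total molecularity of each species is simply the unweighted sum over all $s$ reactions, with no reversible pair collapsed; hence, writing $M$ for the reactant matrix and $P:=M-R$ for the product matrix, the total molecularity of $X_i$ is exactly the $i$-th column sum of $M$ plus the $i$-th column sum of $P$. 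Combining this with $\op{TM}(X_i)\le 2$ and reactant-molecularity $\ge 1$ gives the two key bounds: each column of $M$ sums to $1$ or $2$, and each column of $P$ sums to $0$ or $1$. Thus $P$ is a $0/1$ matrix with at most one $1$ per column, and each species falls into exactly one of three types according to its pair (reactant-molecularity, product-molecularity): $(1,0)$, $(1,1)$, or $(2,0)$. A species is a product species precisely when it is of type $(1,1)$.

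Next I would count. Let $t_1,t_2,t_3$ be the numbers of species of the three types, so that $t_1+t_2+t_3=s$, and I partition the reactions into the set $\mathcal O$ of reactions with empty product complex and the set $\mathcal P$ of reactions producing at least one species. Writing $r_j$ and $p_j$ for the sizes (total molecularities) of the reactant and product complexes of reaction $j$, double counting gives $\sum_j r_j = t_1+t_2+2t_3$ and $\sum_j p_j = t_2$. The hypothesis that $N$ has no outflow reaction forces every reaction in $\mathcal O$ to have $r_j\ge 2$ (a reaction with empty product and $r_j=1$ would be an outflow $X_i\to 0$), while every reaction in $\mathcal P$ has $r_j\ge 1$ and $p_j\ge 1$. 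Feeding these into the two identities together with $|\mathcal O|+|\mathcal P|=s$ yields $|\mathcal O|\le t_3$ and $|\mathcal P|\le t_2$, whence $s=|\mathcal O|+|\mathcal P|\le t_2+t_3$; since $s=t_1+t_2+t_3$ this forces $t_1=0$ and then equality everywhere: $|\mathcal O|=t_3$, $|\mathcal P|=t_2$, every reaction in $\mathcal O$ has exactly $(r_j,p_j)=(2,0)$, and every reaction in $\mathcal P$ has exactly $(r_j,p_j)=(1,1)$. I expect this chain of inequalities being forced to collapse to equality to be the main obstacle, in the sense that it is where all the hypotheses must be balanced against one another correctly.

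Finally I would read off the reaction shapes. A reaction with profile $(1,1)$ has a single-species reactant and a single-species product, each with coefficient one, so it is $A\to B$; here $A\ne B$, since otherwise its row of $R$ would vanish, contradicting full reaction-rank. A reaction with profile $(2,0)$ has empty product and a size-two reactant, so it is either $A+B\to 0$ with $A\ne B$, or $2A\to 0$. It remains to exclude the latter, and this is exactly where the coupling hypothesis enters: in $2A\to 0$ the species $A$ has reactant-molecularity $2$, hence total molecularity $2$, so it appears in no other reaction; since $N$ has at least two reactions and at least two species, the reaction $2A\to 0$ on the single species $A$ splits off from the remaining reactions, contradicting that $N$ is coupled. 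Therefore every reaction is of the form $A\to B$ or $A+B\to 0$, as claimed.
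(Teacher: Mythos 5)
Your proposal is correct, and it shares the paper's overall architecture---a counting argument forced by the molecularity bound, followed by a case analysis in which nonzero orientation eliminates generalized-inflow reactions and coupling eliminates $2A \rightarrow 0$---but the counting itself is organized quite differently. The paper's count is essentially one line: since inflows and outflows are excluded by hypothesis, every reaction contains at least two molecules, while the total number of molecules over all $s$ reactions equals $\sum_i \op{TM}(X_i) \leq 2s$; hence every reaction contains exactly two molecules, and the only two-molecule reactions are $0 \rightarrow A+B$ (zero row of $M_N$, hence zero orientation), $2A \rightarrow 0$ (decouples $N$), $A \rightarrow B$, and $A+B \rightarrow 0$. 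You instead invoke Lemma~\ref{lem:full_rk} at the outset, using full reactant-rank to get reactant-molecularity at least one for every species and to exclude generalized inflows inside the count, and then classify species by the profiles $(1,0)$, $(1,1)$, $(2,0)$ and reactions by whether the product complex is empty. This costs more bookkeeping, but it buys a cleaner treatment of a point the paper's terse proof glosses over: identifying $\op{TM}(X_i)$ (which by definition counts a reversible pair only once) with the plain column sums of $M$ and $P$ over all $s$ rows is legitimate only because full reaction-rank forbids $N$ from containing a reaction together with its reverse, which you state explicitly. One small citation worth adding: your assertion that every product-bearing reaction has $r_j \geq 1$ is precisely the no-generalized-inflow consequence of full reactant-rank, so it should be attributed to Lemma~\ref{lem:full_rk} rather than left implicit. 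Both proofs finish with the identical decoupling argument against $2A \rightarrow 0$.
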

\begin{proof}
We first claim that if $N$ is a coupled, $s$-square network in which all species have total molecularity two, then each reaction of $N$ must contain exactly two molecules.  This is because no reaction can have only one molecule (because inflows and outflows are not allowed).  So, each of the $s$ reactions must have exactly two molecules, and it follows that the total molecularity of each species must be two.  Now, a reaction of the form $0 \rightarrow A+B$ would yield zero orientation for the network (because the corresponding row of the reactant matrix would be zero), and a reaction $2A \rightarrow 0$ is not allowed, because then the network would decouple into this reaction and the remaining subnetwork that does not contain $A$.  So, only two types of reactions are possible: $A+B \rightarrow 0$ and $A \rightarrow B$.   
\end{proof}

\begin{lemma} \label{lem:det_pseudo}
Consider an $n \times n$-matrix $Q \in \{ -1,0,1\}^{n \times n}$ 
such that 
\begin{enumerate}
\item all diagonal entries are $+1$, 
\item the entries above the diagonal (mod $n$), $Q_{j,j+1 ({\normalfont mod}~ n)}$, are $0$ or $\pm 1$, and 
\item all other entries are 0.
\end{enumerate}
Then $\det(Q) \geq 0$. (In fact, the determinant is $0$, $1$, or $2$.)
\end{lemma} 
\begin{proof}
The determinant is $\det(Q)=1 \pm \prod_{i=1}^n (Q_{i,i+1({\normalfont mod}~ n)}) \in \{0,1,2\}$.
\end{proof}



We now can prove Theorem~\ref{thm:total_mol_2} and Corollary~\ref{cor:total_mol_2}.
\begin{proof}[Proof of Theorem~\ref{thm:total_mol_2}]
By Theorem~\ref{thm:proper_subnetworks}, Lemma~\ref{lem:embed_orientation}, and Lemma~\ref{lem:total_mol_embedded networks}, we need only show that if $N$ is a square network with nonzero orientation and with maximum total molecularity at most two, then $\det (M_N)\det (R_N) \geq 0$. Let $s$ denote the number of species of $N$.  Now, if $N$ decouples, then we could reduce to analyzing the two subnetworks separately both of which have fewer than $s$ species; the network $N$ passes the Jacobian Criterion if and only if the two subnetworks do.  The $s=1$ case is trivial (only $2A \rightarrow 0$ is allowed, and in this case both reaction and reactant matrices are the same, so their determinants are the same), so we may proceed by induction on $s$ in the case of a coupled network $N$. 

Now Lemma~\ref{lem:pseudo_at-most-2} applies, so we know that the reactions of $N$ are of the form $A+B \rightarrow 0$ and $A \ra B$.  By relabelling the species, we can ensure that the first reaction contains species $X_1$ and $X_2$, the second reaction contains $X_2$ and $X_3$, and so on.  Therefore, both the reactant matrix and the reaction matrix $R_N$ now have the form of the matrix in Lemma~\ref{lem:det_pseudo}. By the lemma, both determinants are non-negative, so the orientation of the network $N$ is either zero or positive.

Finally, Lemma~\ref{lem:CracFein_subnetwork} implies that a CFSTR network that passes the Jacobian Criterion does not admit multiple steady states.  
\end{proof}

\begin{proof}[Proof of Corollary~\ref{cor:total_mol_2}]
For a weakly-reversible network $G$ that has total molecularity at most two, we consider the CFSTR $\tilde G$ obtained by augmenting the network $G$ by all outflows.  Theorem~\ref{thm:total_mol_2} implies that $\tilde G$ passes the Jacobian Criterion.  Finally, results due to Craciun and Feinberg allow us to conclude that the such a network precludes multiple steady states \cite[Theorem~8.2 and Corollary~8.3]{ME3}.
 \end{proof}


\section{Procedure for simplifying the Jacobian Criterion} \label{sec:algor}
Algorithm~\ref{algor:JC} is the main result of this article.  It combines the results in previous sections into a procedure that simplifies the implementation of the Jacobian Criterion.  After pre-processing in Step~I, the algorithm reduces a \CRN to a simpler network in Step~II, and then decreases the number of embedded networks of the reduced network that must be examined in Step~III.

\noindent
\begin{algorithm}[Simplifying the Jacobian Criterion] \label{algor:JC}
~\\
{\bf Step~I (Pre-processing step).}\\
{\bf Input:} a CFSTR $\Net$. \\
{\bf Output:} either whether $\Net$ passes the Jacobian Criterion, or $\Net$ (to be passed to Step~II).
\begin{enumerate}
	\item If $\Net$ has a self-catalyzing reaction (an embedded reaction $a X_j \rightarrow b X_j$ with $1 \leq a < b$), then $\Net$ automatically fails the Jacobian Criterion.
	\item If each species of $\Net$ has total molecularity at most two, then $\Net$ automatically passes the Jacobian Criterion.
	\end{enumerate}
If neither applies, then pass $\Net$ to Step~II. \\
%
{\bf Step~II (Reduction step).} \\
{\bf Input:} a CFSTR $\Net$ that passes Step~I. \\
{\bf Output:} a non-flow \CRN $G_{\rm red}$ such that the CFSTR $\tilde{G}_{\rm red}$ passes the Jacobian Criterion if and only if $\Net$ does.\\
Perform the following operations on the network $\Net$, in any order, as long as any one can be completed.
	\begin{enumerate}
	\item Remove any inflow or outflow reaction. 
	\item Remove any generalized inflow reaction $0 \rightarrow \sum_i a_i X_i $.
	\item Remove any reaction that contains only one species.  
	\item Remove any species that is not a reactant species.
	\item Remove any species that appears in only one reaction (or in a pair of reversible reactions and in no other reactions). In particular, remove any species whose total molecularity is one.
	\item Remove one copy of any repeated reactions.
\end{enumerate}
If the reduced network is the empty network or all species in the reduced network have total molecularity at most two, then $\Net$ passes the Jacobian Criterion.  Otherwise, pass the reduced network $G^{\rm red}$ to Step~III.\\
%
{\bf Step~III (Analysis of embedded networks step).} \\
{\bf Input:} a \CRN $G$ (such as the output of Step~II) which passes Step~I. \\
{\bf Output:} whether $\tilde G$ passes the Jacobian Criterion (if $G$ is the output of Step~II, then this step will determine whether the input $\Net$ of Step~II passes the Jacobian Criterion). \\
Consider all coupled square embedded networks $N$ of $G$ with the following properties:
	\begin{enumerate}
	\item $N$ contains none of the following reactions: 
	\begin{itemize}
		\item inflow or outflow reactions,
		\item generalized inflow reactions $0 \rightarrow \sum_i a_i X_i $,
		\item reactions containing only one species, 
		\item both a reaction and its reverse reaction, 
		\item two reactions that share the same reactant complex.
	\end{itemize}
	\item $N$ has at least two species.
	\item Each species of $N$ is a reactant species.
	\item Each species appears in at least two reactions (where pairs of reversible reactions are counted only once). 
	\item At least one species has total molecularity three or more.
	\item $N$ has nonzero orientation.
	\end{enumerate}
Output: The network $\tilde G$ passes the Jacobian Criterion if and only if all such square embedded networks $N$ have nonnegative orientation.
\end{algorithm}
\begin{proof}[Proof of correctness of Algorithm~\ref{algor:JC}]
Part 1 of Step~I follows from Corollary~\ref{cor:1emb}, and part 2 follows from Theorem~\ref{thm:total_mol_2}.  
Step~II is valid by Theorem~\ref{thm:reduction}.  
Parts $1$ through $4$ of Step~III follow from Lemma~\ref{lem:full_rk}, Corollary~\ref{cor:1emb}, and Theorem~\ref{thm:proper_subnetworks}.  Part 5 of the Step~III follows from Theorem~\ref{thm:total_mol_2}, and part 6 simply avoids embedded networks with zero orientation.
\end{proof}
\begin{remark} Instead of applying Step~III to a reduced network that is the output of Step~II, one can apply other tests to check the Jacobian Criterion.  
For instance, one can apply the Species-Reaction Graph Criterion \cite{ME2} to the reduced network or directly compute the expansion of the Jacobian determinant using available software \cite{Toolbox, CRNsoftware, BioNetX}.  The reduced network typically contains fewer species and fewer reactions than the original network, so computer computation will be more tractable.
\end{remark}

\begin{remark} 
In practice, the number of square embedded networks that must be examined in Step~III
of Algorithm~\ref{algor:JC} is relatively small compared to the number of $s$-square subnetworks $G_i$ in Lemma~\ref{lem:CracFein_subnetwork}.  Comparisons of the number of square subnetworks $G_i$ versus the number of square embedded networks to be examined appear in the last two columns of Table~\ref{table:ME1} and in the comments of Table~\ref{table:CTF}.  In addition, the determinants of such square embedded networks (as given in Lemma~\ref{lem:CracFein_subnetwork0}) that must be computed are of smaller matrices than those of $G_i$.
\end{remark}

\begin{remark} \label{rem:enum}
 We propose that Algorithm~\ref{algor:JC} could aid in enumerating \CRNs that admit multiple steady states. Recall that failing the Jacobian Criterion is a necessary condition for admitting multiple steady states (Lemma~\ref{lem:CracFein_subnetwork}), so let us consider the problem of identifying the set of networks that fail the Jacobian Criterion. More specifically, we now define a {\em bimolecular} network to be a network in which all complexes contain at most two molecules, and then pose the following question: which bimolecular CFSTRs fail the Jacobian Criterion?

Accordingly, we enumerated all bimolecular CFSTRs that contain two pairs of reversible non-flow reactions and we found that there are 386 such networks. Of these 386 networks, 142 have a maximum total molecularity of two or fewer, and thus immediately pass the Jacobian Criterion. 
It is interesting to note that for these 142 networks, several distinct species typically have the same total molecularity. Networks that have such symmetry are computationally the most expensive to enumerate \cite{Deckard}. As we are interested in listing only those networks that fail the Jacobian Criterion, we can avoid these computationally expensive networks.  

Moreover, of the remaining 244 networks, applying Step~II of the Algorithm~\ref{algor:JC} immediately reduces the number of networks that need to be examined. In other words, many networks are related in whether or not they pass the Jacobian Criterion and thus studying all 244 networks is redundant. When we discard networks in which some species has total molecularity equal to one, we are left with only 55 networks whose multistationarity needs to be investigated.  Details of our enumeration of bimolecular CFSTRs appear in \cite{JS2}.  Note that a similar enumeration of small bimolecular reaction networks was undertaken by Deckard, Bergmann, and Sauro \cite{Deckard}, from which Pantea and Craciun sampled networks to compute the fraction of such networks that pass the Jacobian Criterion \cite[Figure 1]{Pantea_comput}.
\end{remark}

\section{Examples} \label{sec:examples}
The examples in this section illustrate that Algorithm~\ref{algor:JC} in some cases can determine whether a network passes the Jacobian Criterion more quickly than applying the Species-Reaction Graph Criterion of Craciun and Feinberg~\cite{ME2}, Deficiency Theory \cite{FeinLectures,FeinDefZeroOne}, or the results of Banaji and Craciun~\cite{BanajiCraciun2010}.  Moreover, we are able to analyze examples in~\cite{ME1,ME2,CTF06,SchlosserFeinberg}, which appear in Tables~\ref{table:ME1} and~\ref{table:CTF}.  In addition, we demonstrate that instead of checking the orientations of all the $s$-square networks $G_i$ from Lemma~\ref{lem:CracFein_subnetwork} (which entails computing determinants of many $s \times s$-matrices), we need only compute the orientations of certain square embedded networks, which are both smaller and fewer in number.  The final columns in both Tables~\ref{table:ME1} and~\ref{table:CTF} illustrate the great reduction in the number of subnetworks that must be examined.  We begin with examples in which no orientations must be computed, as checking the Jacobian Criterion sometimes can be performed by inspection.
\begin{table}
\begin{center}
  \begin{tabular}{ | l l | c | c | c | c | c |}
    \hline
	~ & ~ & ~ & ~ & ~ & ~ & ~ \\
	~ & ~ & 		$\tilde G$ & 	$\tilde G$ does & ~ & Number & Number \\
	Network  & ($G$) & 	passes  &   	not admit & Remarks & of $G_i$,  & of SENs \\ 
	~ & ~ & 		JC & 		MSS & ~ & $r+r_o \choose s$ & to examine \\  \hline
    (i)	& 	$A+B \lra P$ & ~ 	& ~ 	& ~ &~&~ \\ 
    ~	& 	$B+C \lra Q$ & No 	& No 	& See Ex.~\ref{ex:fewerEmbeddedNetworks} & ${{6+5} \choose 5} = 462 $  & 2\\ 
    ~	& 	$C \lra 2A$ & ~ 	& ~ 	& ~&~&~ \\ \hline
    (ii)& 	$A+B \lra P$ & ~ 	& ~ 	& ~&~&~ \\ 
    ~ 	& 	$B+C \lra Q$ & Yes 	& Yes 	& See Ex.~\ref{ex:fewerEmbeddedNetworks}  & ${{8+7} \choose 7} = 6435 $ & 2 \\ 
    ~	& 	$C+D \lra R$ & ~ 	& ~ 	& ~&~&~ \\
    ~	& 	$D \lra 2A$ & ~ 	& ~ 	& ~&~&~ \\ \hline
    (iii)& 	$A+B \lra P$ & ~ 	& ~ 	& ~&~&~ \\ 
    ~	& 	$B+C \lra Q$ & ~ 	& ~ 	& ~&~&~ \\
    ~	& 	$C+D \lra R$ & No 	& No 	& See Ex.~\ref{ex:fewerEmbeddedNetworks} & ${{10+9} \choose 9} = 92 378 $ & 2 \\ 
    ~	& 	$D+E \lra S$ & ~ 	& ~ 	& ~&~&~ \\ 
    ~	& 	$E \lra 2A$ & ~ 	& ~ 	& ~&~&~ \\ \hline
    (iv)&	$A+B \lra P$ & ~ 	& ~ 	& ~ &~&~ \\ 
    ~	& 	$B+C \lra Q$ & Yes 	& Yes 	& TM $\leq 2$,& ${{6+5} \choose 5} = 462 $ & {\bf --} \\ 
    ~	& 	$C \lra A$ & ~ 		& ~ 	& see Ex.~\ref{ex:TM2}	& ~ &~ \\ \hline
    (v)	& 	$A+B \lra F$ & ~ 	& ~ 	& ~&~&~ \\ 
    ~ 	& 	$A+C \lra G$ & No 	& No 	& See Ex.~\ref{ex:fewer_SENs}&  ${{8+7} \choose 7} = 6435 $ & 3 \\ 
    ~	& 	$C+D \lra B$ & ~ 	& ~ 	& ~&~&~ \\
    ~	& 	$C+E \lra D$ & ~ 	& ~ 	& ~&~&~ \\ \hline
    (vi)& 	$A+B \lra 2A$ & No 	& Yes 	& Self-catalyzing,&~&~  \\ 
	~ & ~ & ~ & ~ 				& see Ex.~\ref{ex:self-cat} & ${{2+2} \choose 2} = 6$ &{\bf --}\\ \hline
    (vii)& 	$2A+B \lra 3A$ & No 	& No 	& Self-catalyzing, &~&~ \\ 
	~ & ~ & ~ & ~ 				& see Ex.~\ref{ex:self-cat}& ${{2+2} \choose 2} = 6$ &{\bf --} \\ \hline
    (viii)& 	$A+2B \lra 3A$ & No 	& Yes 	& Self-catalyzing, &~&~ \\ 
	~ & ~ & ~ & ~ 				& see Ex.~\ref{ex:self-cat}& ${{2+2} \choose 2} = 6$ &{\bf --} \\ \hline
  \end{tabular}
\caption{Networks examined in Table~1 of Schlosser and Feinberg \cite{SchlosserFeinberg}.  Listed are the non-flow networks $G$, whether the CFSTR network $\tilde G$ passes the Jacobian Criterion (JC), whether $\tilde G$ precludes multiple steady states (MSS), additional comments, the number of $s$-square networks $G_i$ of $\tilde G$ that do not contain inflow reactions (here, $r_o = s$; see~Lemma~\ref{lem:CracFein_subnetwork}), and the number of square embedded networks (SENs) which must be considered in Algorithm~\ref{algor:JC}.  The entry in the last column is left blank ({\bf --}) for networks that can be determined to pass or to fail the Jacobian Criterion due to a self-catalyzing reaction or low total molecularity.  Note that the number of such SENs is much fewer than the number of subnetworks $G_i$, resulting in a substantial reduction in the number and size of the orientations that must be computed.  
Many of these networks were also analyzed by Craciun and Feinberg~\cite[Table 1.1]{ME1}, Banaji, Donnell, and Baigent \cite{BanajiDonBai} and Craciun, Helton, and Williams \cite{CHW08}.  
\label{table:ME1} }
\end{center} 
\end{table} 

\begin{table}
\begin{center}
  \begin{tabular}{ | l l | c | c | l | }
    \hline
	~ & ~ & 		~ & 		~ & 		~\\
	~ & ~ & 		$\tilde G$ & 	$\tilde G$ does & 	~\\
	Network & ($G$) & 	passes	&	not admit & 	Remarks \\
	~  & ~ &  JC  &  	MSS & 		~ \\ \hline
    (i)	& 	$E+S \lra ES \ra E+P$ 
				& Yes 	& Yes 	& Total molecularity   \\ 
	~ & ~ & ~ & ~ 				& $\leq 2$, see Ex.~\ref{ex:TM2} \\ \hline
    (ii)	& 	$E+S \lra ES \ra E+P$ 
				& Yes 	& Yes	& Reduced network has  \\ 
	~ & $E+I \lra EI$ & ~ & ~ 				& TM $\leq 2$, see Ex.~\ref{ex:reduceThenTM} \\ \hline
    (iii)	& 	$E+S \lra ES \ra E+P$ 
				& Yes 	& Yes	& Reduced network has   \\ 
	~ & $ES+I \lra ESI$ & ~ & ~ 				& TM $\leq 2$, see Ex.~\ref{ex:reduceThenTM} \\ \hline
    (iv)	& 	$E+S \lra ES \ra E+P$ 
				& ~	& ~ 	& Only 9 SENs  \\ 
	~ & $E+I \lra EI$ & No 		& No 				& to examine,  \\
	~ & $ES+I \lra ESI \lra EI+S $ & ~ & ~ 				& see Example~\ref{ex:fewer_SENs} \\ \hline
   (v)	& 	$E+S_1 \lra ES_1 $ 
				& Yes 	& Yes	& Total molecularity   \\ 
	~ & $S_2 +ES_1 \lra ES_1S_2 \ra E+P$ & ~ & ~ & $\leq 2$, see Ex.~\ref{ex:TM2} \\ \hline  
    (vi)	& 	$E+S_1 \lra ES_1 \quad E+S_2 \lra ES_2$ 
				& ~	& ~ 	& Only 14 SENs   \\ 
	~ & $S_2 +ES_1 \lra ES_1S_2 \lra S_1+ES_2$ 
				& No 	& No 				& to examine, \\
	~ & $ES_1S_2 \ra E+P$ & ~ & ~ 				& see Example~\ref{ex:fewer_SENs} \\ \hline
    (vii)	& 	$S_1+E_1 \lra E_1S_1$ 
				& ~	& ~ 	& Only 1 SEN  \\ 
	~ & $S_2 +E_1S_1 \lra E_1S_1S_2 \ra P_1+E_1$ & Yes & Yes 				& to examine, \\
	~ & $E_2+S_2 \lra E_2S_2 \ra 2S_1+E_2$ & ~ & ~ 				& see Example~\ref{ex:fewer_SENs} \\ \hline
    (viii)	&  $S_1+E_1 \lra E_1S_1$		& ~ & ~ 	& ~ \\ 
	~ & $S_2 +E_1S_1 \lra E_1S_1S_2 \ra P_1+E_1$ 	& ~ & ~ 	& Only 1 SEN \\ 
	~ & $S_2 +E_2 \lra E_2S_2$ 			& No & No 	& to examine, \\ 
	~ & $S_3 +E_2S_2 \lra E_2S_2S_3 \ra P_2 +E_2$ 	& ~ & ~ 	& see Example~\ref{ex:fewer_SENs} \\ 
	~ & $S_3 +E_3 \lra E_3S_3 \ra 2S_1 + E_3$ 	& ~ & ~ 	& ~ \\ \hline
    (ix)	&  $S_1+E_1 \lra E_1S_1$		& ~ & ~ 	& ~ \\
	~ & $S_2 +E_1S_1 \lra E_1S_1S_2 \ra P_1+E_1$  	& ~ & ~ 	& ~ \\
	~ & $S_2 +E_2 \lra E_2S_2$  			& ~ & ~		&  Only 1 SEN  \\
	~ & $S_3 +E_2S_2 \lra E_2S_2S_3 \ra P_2 +E_2$ 	& Yes & Yes	& to examine, \\
	~ & $S_3 +E_3 \lra E_3 S_3$ 			& ~ & ~ 	& see Example~\ref{ex:fewer_SENs} \\
	~ & $S_4 +E_3S_3 \lra E_3S_3S_4 \ra P_3 +E_3$ 	& ~ & ~		& ~ \\ 
	~ & $S_4 +E_4 \lra E_4S_4 \ra 2S_1 + E_4$ 	& ~ & ~ 	& ~ \\ \hline
\end{tabular}
\caption{The enzyme catalysis networks examined in Table~1 of Craciun, Tang, and Feinberg~\cite{CTF06}.  Listed are the underlying networks $G$, whether the CFSTR network $\tilde G$ passes the Jacobian Criterion (JC), whether $\tilde G$ precludes multiple steady states (MSS), and additional comments, including the number of square embedded networks (SENs) that need to be examined for those networks for which total molecularity (TM) is not sufficient for verifying the Jacobian Criterion.  \label{table:CTF} }
\end{center} 
\end{table} 

\begin{example}[Networks containing a self-catalyzing reaction] \label{ex:self-cat}
Recall that Corollary~\ref{cor:1emb} states that if a \CRN contains a self-catalyzing reaction, then it fails the Jacobian Criterion.  One such network is the one examined in Examples~\ref{ex:1} and~\ref{ex:1again}.  Other networks that are resolved easily by Step~I of Algorithm~\ref{algor:JC} include networks (vi), (vii), and (viii) in Table~\ref{table:ME1}.
\end{example}

In addition to the self-catalyzing networks, the other networks that are resolved in Step~I of Algorithm~\ref{algor:JC} are those with total molecularity at most two.
\begin{example}[Networks with total molecularity at most two] \label{ex:TM2}
Consider the following network examined by Gnacadja {\em et al.}\ \cite{Gnac}:
\begin{align}\label{net:LRAT}
	L+R & \leftrightarrow LR \quad  A+R \leftrightarrow AR \\
	L+ T  & \leftrightarrow L T  \quad  A+ T \leftrightarrow AT~. \notag
\end{align}
This ligand-receptor-antagonist-trap network summarizes the four pairs of binding and dissociating reactions that occur in the presence of two drugs, a `receptor antagonist' and a `trap,' which are used to treat patients with rheumatoid arthritis.  In their work, Gnacadja {\em et al.}\ apply Deficiency Theory to determine the existence and uniqueness of a steady state~\cite[Proposition~2]{Gnac}.  If one is  interested only in precluding multiple steady states, then the approach advocated here can be performed more quickly.  It is easily seen that the total molecularity of each of the eight species, $L,~R,~A,~T,~LR,~AR,~LT,$ and $AT$, is no more than two.  Therefore, Theorem~\ref{thm:total_mol_2} implies that a CFSTR version of this network passes the Jacobian Criterion, and does not admit multiple steady states.  Moreover, as the underlying network~\eqref{net:LRAT} is weakly-reversible, we can conclude that multiple steady states is also ruled out in the 
non-CFSTR setting \cite{ME3}.

Other networks with total molecularity at most two include the network (iv) in Table~\ref{table:ME1} and the enzyme catalysis networks (i) and (iv) in Table~\ref{table:CTF}.  All of these networks can be seen by inspection to pass the Jacobian Criterion, without appealing to the SR Graph Criterion.
\end{example}

The networks in Example~\ref{ex:TM2} had total molecularity at most two, and so could be seen immediately to pass the Jacobian Criterion.  The networks in the next example do not have this property until after applying Step~II of Algorithm~\ref{algor:JC}.
\begin{example}[Networks with total molecularity at most two after reduction] \label{ex:reduceThenTM}
Consider the CFSTR introduced by Banaji and Craciun whose non-flow subnetwork is:
	\begin{align} \label{net:BC}
	D  \lra A+B+C \quad \quad
	E  \lra A+B+C \quad \quad
	F  \lra A+B
	\end{align} 
Banaji and Craciun showed that the CFSTR fails the Species-Reaction Graph Criterion, but nevertheless passes the Jacobian Criterion, because the stoichiometric matrix is `strongly sign determined' ~\cite[\S 6.2]{BanajiCraciun2010}.  We now show that Step~II of Algorithm~\ref{algor:JC} can easily be performed on this network, to conclude that it passes the Jacobian Criterion. 

In Step~I of Algorithm~\ref{algor:JC}, we note that the total molecularity of species $A$ and $B$ in network~\eqref{net:BC} are both three, and no self-catalyzing reactions exist.  So, we begin Step~II by removing species $D$, $E$, and $F$, each of which has total molecularity equal to one.  The resulting network is:
	\begin{align*} 
	0  \lra A+B+C \quad \quad
	0  \lra A+B+C \quad \quad
	0  \lra A+B
	\end{align*} 
Now the first two pairs of reversible reactions are the same, so we remove one copy.  Additionally, we remove the remaining two generalized inflow reactions, to obtain:
	\begin{align*} 
	0  \la A+B+C \quad \quad
	0  \la A+B
	\end{align*} 
Clearly, each species in this reduced network has total molecularity no more than two, so we conclude that the original CFSTR passes the Jacobian Criterion.
Two other networks whose reduced networks can be obtained easily and have total molecularity at most two are the enzyme catalysis networks with inhibition (ii) and (iii) in Table~\ref{table:CTF}.
\end{example}

While the above examples did not need to proceed beyond Step~II of Algorithm~\ref{algor:JC}, the networks in the  following example do require this step.  We will see that the algorithm greatly reduces the number of subnetworks that must be checked.
\begin{example}[Networks requiring the full algorithm] \label{ex:fewerEmbeddedNetworks}
Consider the reversible network (i) in Table~\ref{table:ME1}; its non-flow subnetwork is: 
\begin{align}\label{net:CF_i}
 A+B \lra P \quad \quad 
B+C \lra Q \quad \quad 
C \lra 2A
\end{align} 
The output of Step~II of Algorithm~\ref{algor:JC} is the following network:
\begin{align*}
 A+B \rightarrow 0 \quad \quad 
B+C \rightarrow 0 \quad \quad 
C \lra 2A
\end{align*} 
As only species $A$ has total molecularity greater than two, each square embedded network we consider must contain both $A+B \rightarrow 0$ and one of the reversible reactions $C \lra 2A$ (because we need not consider networks with both directions of a reversible reaction).  However, such a $2$-square embedded network would have a species ($B$ or $C$) that appears in only one complex. Therefore, we need only consider the two $3$-square embedded networks, which are the following:
\begin{align} \label{net:emb1}
 \{ A+B \rightarrow 0 \quad \quad 
& B+C \rightarrow 0 \quad \quad 
C \ra 2A\} ~, \quad {\rm and} \\
 \{ A+B \rightarrow 0 \quad \quad 
& B+C \rightarrow 0 \quad \quad 
C \la 2A \} \label{net:emb2}
\end{align}
The first embedded network~\eqref{net:emb1} has negative orientation, and the second network~\eqref{net:emb2} has positive orientation, so a CFSTR version of the original network~\eqref{net:CF_i} fails the Jacobian Criterion.  Note that our algorithm reduced the Jacobian Criterion from examining the ${{6+5} \choose{5}}=462$ $s$-square subnetworks (the $G_i$ in Lemma~\ref{lem:CracFein_subnetwork}) to two embedded networks.  Networks~(ii),~(iii), and~(v) in Table~\ref{table:ME1} can be analyzed similarly: they require examining only two, two, and three square embedded networks, respectively.
\end{example}

Our final example network requires more complicated analysis, but the number of square embedded networks that must be checked is nevertheless small.
\begin{example} \label{ex:fewer_SENs}
Consider network (iv) in Table~\ref{table:CTF}. After removing species `$P$' in Step~II and then relabeling the six remaining species, the  subnetwork of non-flow reactions is:
\begin{align*}
A+B \lra C  \quad \quad
C \ra A  \quad \quad
A+E \lra F \quad \quad
C+E \lra G \quad \quad
G \lra F+B 
\end{align*}
Only one reaction, $C \ra A$, is not reversible; we label the remaining four reversible reactions as $R_1$ through $R_4$.  We now show that instead of checking that the ${16 \choose 7} =11440$ $7$-square subnetworks of the original CFSTR (with $7$ species and $9$ non-flow reactions) have non-negative orientation, Algorithm~\ref{algor:JC} allows us to examine only nine square embedded networks.  

Let $N$ denote a square embedded network that must be considered according to Step~III of Algorithm~\ref{algor:JC}.  The total molecularities of the species $(A,B,C,E,F,G)$ are respectively $(3,2,3,2,2,2)$. So species $A$ together with all its reactions ($R_1$,~$R_2$,~$R_3$) or species $C$ together with all its reactions ($R_1$,~$R_2$,~$R_4$) must be in $N$. In particular, both reactions $R_1$ and $R_2$ must appear in $N$, and both $A$ and $C$ must take part so that $R_2$ is not a flow reaction. Next, in order for species $C$ not to appear twice as a reactant, $R_1$ must appear in $N$ in the forward direction $A+B \ra C$. 

Now, the rows corresponding to reactions $R_1$ and $R_2$ in the reaction matrix of $N$ must be linearly independent, so the species $B$ must be in $N$. Thus, we also must include reaction $R_5$ so that $B$ has total molecularity greater than 1. Finally, so that $R_5$ is not a flow reaction, we must include species $F$ or $G$.

 Therefore, $N$ must be a 4-square or a 5-square embedded network, and the only possible 4-square networks are the following:
\begin{align*}
&\{A+B \ra C \quad \quad C \ra A \quad \quad B \ra G \quad \quad G \ra C\} \quad {\rm and} \\
&\{A+B \ra C \quad \quad C \ra A \quad \quad F+B \ra 0 \quad \quad A \lra F\}~,  
\end{align*}
where the second network represents two square embedded networks.  Additionally, the only possible 5-square networks are 
\begin{align*}
&\{A+B \ra C \quad \quad C \ra A \quad \quad A \ra F \quad \quad G \ra C \quad \quad F+B \ra G\}~, \\
&\{A+B \ra C \quad \quad C \ra A \quad \quad A + E \ra 0 \quad \quad C +E \lra G \quad \quad G \lra B\}~, \quad {\rm and} \\
&\{A+B \ra C\quad \quad C \ra A \quad \quad A + E \lra F \quad \quad C +E \ra 0 \quad \quad F+B \ra 0\}. 
\end{align*}
Here, the second network represents three networks (the network in which $G$ appears twice as a reactant is not permitted), and the third network represents two networks.  Therefore, there are 9 possible square embedded networks $N$.  Using similar analysis, network (v) of Table~\ref{table:ME1} and networks (vi), (vii), (viii), and (ix) of Table~\ref{table:CTF} can be shown to require checking only 3, 14, 1, 1, and 1 square embedded networks, respectively.

\end{example}



\subsection*{Acknowledgments}
This project was initiated by Badal Joshi at a Mathematical Biosciences Institute (MBI) summer workshop under the guidance of Gheorghe Craciun.  Badal Joshi was partially supported by a National Science Foundation grant (EF-1038593). Anne Shiu was supported by a National Science Foundation postdoctoral fellowship (DMS-1004380).  This work benefited from discussion with Murad Banaji. The authors acknowledge two conscientious referees whose detailed comments improved this article.
\bibliographystyle{amsplain}
\bibliography{multistationarity}
\end{document}